\newtheorem{thm}{Theorem}[section]
\newtheorem*{thm*}{Theorem}
\let\old@newtheorem\newtheorem
\renewcommand{\newtheorem}[2]{\old@newtheorem{#1}[thm]{#2}
\newrefformat{#1}{#2~\ref{##1}}}
\theoremstyle{plain}
\newtheorem{prop}{Proposition}
\newtheorem{cor}{Corollary}
\newtheorem{lem}{Lemma}
\newtheorem{fact}{Fact}
\theoremstyle{definition}
\newtheorem{defn}{Definition}
\newtheorem{notation}{Notation}
\theoremstyle{remark}
\newtheorem{rem}{Remark}
\newcommand{\R}{\mathbb R}
\newcommand{\N}{\mathbb N}
\newcommand{\K}{\mathbb K}
\newcommand{\Z}{\mathbb Z}
\newcommand{\on}{\mathbf{On}}
\newcommand{\no}{\mathbf{No}}
\DeclareMathOperator{\eps}{\varepsilon}
\DeclareMathOperator{\suchthat}{\,:\,}
\DeclareMathOperator{\rf}{\mathbf{k}}
\DeclareMathOperator{\vr}{O(1)}
\DeclareMathOperator{\vleq}{\preceq}
\DeclareMathOperator{\vless}{\prec}
\DeclareMathOperator{\veq}{\asymp}
\title{Exponential fields and Conway's omega-map}
\author[A.~Berarducci, S.~Kulhmann, V.~Mantova, M.~Matusinski]{Alessandro Berarducci, Salma Kuhlmann,\\ Vincenzo Mantova, Micka\"el Matusinski}
\thanks{A.B. was partially supported by the project ``PRIN 2012, Logica Modelli e Insiemi". V.M. was partially supported by ``Fondation Sciences Mathématiques de Paris"}
\address{Alessandro Berarducci: Dipartimento di Matematica, Universit\`a di Pisa,
        Largo B. Pontecorvo 5, 56127 Pisa, Italy}
\email{alessandro.berarducci@unipi.it}
\address{Salma Kuhlmann: Fachbereich Mathematik und Statistik,
Universität Konstanz,
Universitätsstraße 10,
78457 Konstanz, Germany}
\email{salma.kuhlmann@uni-konstanz.de}
\address{Micka\"el Matusinski: Institut de Mathématiques de Bordeaux UMR 5251,
                           Université de Bordeaux,
                           351 cours de la Libération,
                           33405 Talence cedex, France}
\email{mickael.matusinski@math.u-bordeaux.fr}
\address{Vincenzo Mantova: School of Mathematics, University of Leeds,
Leeds LS2 9JT,
United Kingdom}
\email{v.l.mantova@leeds.ac.uk}
\subjclass[2010]{Primary 03C64; Secondary 16W60.}
\keywords{Exponential fields, Hahn fields, Surreal Numbers}
\date{Oct. 8th, 2018. Revised Jan. 17, 2019}
\begin{document}

\begin{abstract}
 Inspired by Conway's surreal numbers, we study real closed fields whose value group is isomorphic to the additive reduct of the field. We call such fields omega-fields and we prove that any omega-field of bounded Hahn series with real coefficients admits an exponential function making it into a model of the theory of the real exponential field. We also consider relative versions with more general coefficient fields.
\end{abstract}
\maketitle

\section{Introduction}
We study real closed valued fields $\K$, with a convex valuation ring $\vr\subseteq \K$ satisfying the property that the value group $v(\K^{\times})$ is isomorphic to the additive reduct $(\K,+,<)$ of the field, where $v$ is the valuation induced by $\vr$. We call \textbf{omega-field} a field with this property. The name is motivated by the fact that any omega-field admits a map akin to Conway's omega-map $x\mapsto \omega^x$ on the field of surreal numbers $\no$ \cite{Conway76} or its fragments $\no(\lambda)$ studied in \cite{DriesE2001}, where $\lambda$ is an $\epsilon$-number. We need to recall that any real closed field $\K$ admits a section of the valuation, hence it has a multiplicative subgroup $G\subseteq \K^{>0}$, called a group of \textbf{monomials}, given by the image of the section. Since $G$ is a multiplicative copy of $v(\K^{\times})$, we have that $\K$ is an omega-field if and only if it admits an isomorphism $$\omega:(\K,+,0,<)\cong (G,\cdot,1,<),$$ and we shall call \textbf{omega-map} any such isomorphism. The prototypical example is Conway's omega-map on the surreal numbers, and in analogy with the surreal case, we use the exponential notation $\omega^x$ to denote the image of $x$ under $\omega$.

Here we explore the relation between omega-fields and exponential fields, where an \textbf{exponential field} is a real closed field $\K$ admitting an \textbf{exponential map}, that is an isomorphism $\exp:(\K,+,0,<)\cong (\K^{>0},\cdot,1,<)$. We shall freely write $e^x$ rather than $\exp(x)$ when convenient. Note that $\omega^x$ should not be read as $e^{\omega \log(x)}$ (the easiest way to see why is that the map $x \mapsto \omega^x$, if there is such an omega-map, is not continuous in the order topology of $\K$). While in general there are no containments between the class of fields admitting an omega-map and that of fields admitting an exponential map, a non-trivial inclusion of the former in the latter can be obtained by restricting the analysis to $\kappa$-bounded Hahn fields, as discussed below.

In general, any real closed valued field $\K$ with monomials $G$ is isomorphic to a truncation closed subfield (see Definition \ref{defn:analytic subfield} (1)) of the Hahn field $\rf((G))$ \cite{Mourgues1993}, where $\rf\cong \vr/o(1)$ is the residue field and we write $o(1)$ for the maximal ideal of $\vr$. For the sake of simplicity in this introduction we focus on the typical case $\rf=\R$, but our results hold more generally assuming that the residue field $\rf$ is a model of $T_{an,\exp}$,  the theory of the real exponential field $\R_{\exp}$ with all restricted analytic functions \cite{Dries1994}.
The full Hahn field $\R((G))$ is always naturally a model of the theory of restricted analytic functions $T_{an}$ \cite{Dries1994}, but it never admits an exponential function if $G\not= 1$ \cite{Kuhlmann1997}. However, for every regular uncountable cardinal $\kappa$, there is a group $G$ such that the \emph{$\kappa$-bounded} Hahn field $\R((G))_\kappa$ does admit an exponential function \cite{Kuhlmann2005}. We thus restrict our analysis to fields of the form $\K= \R((G))_\kappa$ (without assuming \emph{a priori} that they admit an exponential map). Our first result is the following. The case $G = \no(\kappa)$ with $\kappa$ regular uncountable is in \cite{DriesE2001}.

\begin{thm*}[\ref{thm:omega-exp}]
  Every omega-field of the form $\R((G))_\kappa$ admits an exponential function making it into a model of $T_{an,\exp}$.
\end{thm*}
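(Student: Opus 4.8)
Let $o(1)$ denote the maximal ideal of the valuation ring $\vr\subseteq\K=\R((G))_\kappa$, and decompose $\K=\K_\uparrow\oplus\R\oplus o(1)$ as ordered abelian groups, where $\K_\uparrow$ is the group of \emph{purely infinite} series (those with support in $G^{>1}$); dually $\K^{>0}=G\times\R^{>0}\times(1+o(1))$. On $\R$ one takes the ordinary real exponential and on $o(1)$ the map $\varepsilon\mapsto\sum_{n}\varepsilon^{n}/n!$, which converges in any Hahn field and is an isomorphism onto $1+o(1)$; it then remains to produce an isomorphism of ordered groups $\ell\colon(\K_\uparrow,+)\xrightarrow{\ \sim\ }(G,\cdot)$. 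Taking $\exp$ to be the product of these three maps gives an isomorphism $(\K,+)\cong(\K^{>0},\cdot)$ that agrees with the restricted exponential power series on $[-1,1]$; provided $\ell$ satisfies the growth axiom, namely that $\ell(a)>g_{0}^{n}$ for every $n\in\N$ whenever $a\in\K_\uparrow$ is positive with leading monomial $g_{0}$, this $\exp$ satisfies Ressayre's axiomatization of $T_{\exp}$. Since moreover $\R((G))_\kappa$ is a model of $T_{an}$ \cite{Dries1994} (restricted analytic functions act via their convergent Taylor expansions, and the $\kappa$-bounded subfield is closed under them), and $T_{an,\exp}$ is axiomatized over $T_{an}$ by the exponential axioms just verified, we get $\K\models T_{an,\exp}$. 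So the whole problem reduces to constructing $\ell$.

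\textbf{Building the left exponential from the omega-map.}
Because $\ell$ has values in the group of monomials $G$, which the omega-map identifies with $(\K,+)$, it is a homomorphism into a multiplicative copy of $(\K,+)$ and is therefore determined by a map $\lambda\colon\K^{>0}\to\K$ with $\ell(\omega^{x})=\omega^{\lambda(x)}$, extended to arbitrary purely infinite series by $\R$-linearity (i.e.\ $\ell(ra)=\ell(a)^{r}$) and Hahn-summation: explicitly $\ell\big(\sum_{x}r_{x}\omega^{x}\big)=\omega^{\sum_{x}r_{x}\lambda(x)}$, which makes sense as soon as the family $(\lambda(x))_{x}$ is summable. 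I would build $\lambda$ by transfinite recursion, using the omega-map as the mechanism that moves the recursion up one ``exponential rank'' at each step: at the ground level one uses the real logarithm, and the passage through successive ranks is effected by composing with $\omega$. This is the general-omega-field counterpart of Gonshor's recursive definition of $\exp$ on the purely infinite surreals (whence the case $G=\no(\kappa)$ in \cite{DriesE2001}) and of the left-exponential recipe in the circle of \cite{Kuhlmann1997}. Along the way one verifies that $\lambda$ is order-preserving and summable, that the induced $\ell$ is onto $G$ (every monomial is $\exp$ of a unique purely infinite series), and that the growth axiom holds.

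\textbf{The main obstacle.}
The delicate point is exactly this last verification: one must simultaneously arrange that the range of $\lambda$ is cofinal and coinitial enough in $\K$ for $\ell$ to be \emph{surjective}, that $\lambda(x)$ does not overshoot, so that $\ell^{-1}$ remains order-preserving, and that $\lambda(x)$ outgrows $\N x$. It is precisely this balance that fails for the full Hahn field $\R((G))$ \cite{Kuhlmann1997}, and the role of the omega-field hypothesis is to guarantee the matching of cofinalities between $G^{>1}$ and $G$ that makes the recursion go through; the classical obstruction coming from the fixed points of $x\mapsto\omega^{x}$ is what the recursion has to climb over, and I expect proving that the recursively defined $\lambda$ does this uniformly — that is, that being an omega-field is exactly the hypothesis needed — to be the bulk of the argument. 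Upgrading ``model of $T_{\exp}$'' to ``model of $T_{an,\exp}$'' is routine given the cited results on $T_{an}$ and the o-minimality/model-completeness of $T_{an,\exp}$.
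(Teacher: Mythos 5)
Your reduction is the same as the paper's: split $\K=\K^{\uparrow}\oplus\R\oplus o(1)$, use the real exponential on $\R$ and the Taylor series on $o(1)$, and reduce everything to an order-isomorphism $\ell:(\K^{\uparrow},+)\cong(G,\cdot)$ subject to a growth condition; the passage from that growth condition to $T_{an,\exp}$ via Ressayre and van den Dries--Macintyre--Marker is also as in the paper (\prettyref{prop:GAlog}). The gap is that you never actually construct $\ell$, and the method you propose --- a transfinite recursion on ``exponential rank'', modelled on Gonshor's inductive definition on the surreals and on the case $G=\no(\kappa)$ --- has nothing to recurse on here. The theorem concerns an \emph{arbitrary} omega-field $\R((G))_\kappa$ with an \emph{arbitrary} omega-map: there is no simplicity relation, no canonical generation of $G$ in stages, hence no well-founded rank along which your recursion could proceed. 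The paper flags exactly this (``similar inductive definitions make no sense'' in this generality) and replaces the recursion by a different device.

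That device has two parts. First (\prettyref{thm:omega-log}), \emph{any} chain isomorphism $h:\K\cong\K^{>0}$ already yields an exponential field, via $\log(\omega^{\omega^{x}})=\omega^{h(x)}$ extended by ``$\log$ of an infinite product is the infinite sum of the $\log$s''; surjectivity of the resulting map onto $\K^{\uparrow}$ is immediate from $G^{>1}=\omega^{h(\K)}$, so the cofinality-matching you worry about is not where the difficulty lies. Second, the growth axiom reduces (\prettyref{lem:GAh}) to the single condition $h(x)\prec\omega^{x}$ for all $x$, and the existence of such an $h$ (\prettyref{lem:h-exists}) is obtained by a bootstrapping argument for which your sketch has no counterpart: one takes two elementary piecewise-defined chain isomorphisms $h_{0},h_{1}$, forms the associated exponentials $\exp_{0},\exp_{1}$, proves the estimates $\exp_{0}(x)\prec\omega^{x}$ for $x>\R$ and $\exp_{1}(x)\prec\omega^{x}$ for $x\leq-\omega^{3}$, and then glues $\exp_{0}$, $\exp_{1}$ and a linear interpolation into a single chain isomorphism $h$ with $h(x)\prec\omega^{x}$ everywhere. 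Without this step (or some substitute for it) the ``main obstacle'' you correctly identify remains unresolved, so the proposal is an accurate plan of attack but not a proof.
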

Our work was partly motivated by the desire to understand the connections between the surreal numbers, with its various subfields studied in \cite{Berarducci2015a,Mantova2017}, and the exponential fields of the form $\R((G))_\kappa$ constructed by S.~Kuhlmann and S.~Shelah in \cite{Kuhlmann2005}.
We shall prove that the latter are not always omega-fields (\prettyref{thm:no-omega}), but they are omega-fields  if and only if  $G$ is order-isomorphic to $G^{>1}$ (\prettyref{thm:criterion-for-omega}); in this case, given a chain isomorphism $\psi:G\cong G^{>1}$, there is an omega-map satisfying $\omega^g = e^{\psi(g)}$ for all $g\in G$.

Let us now discuss Theorem \ref{thm:omega-exp} in more detail. We show that given $\K=\R((G))_\kappa$ and an omega-map $\omega:\K\cong G$, we can construct an exponential function directly starting from $\omega$ and an auxiliary chain isomorphism
$$h : (\K,<)\cong(\K^{>0},<),$$
where by \textbf{chain} we mean linearly ordered set. Any choice of $h$ yields an exponential field (\prettyref{thm:omega-log}) and at least one choice of $h$ will yield a model of $T_{an,\exp}$ (\prettyref{thm:omega-exp}). Varying $h$ we can thus produce a variety of exponential fields; some of them are models of $T_{\exp}$, while all the others are not even o-minimal (\prettyref{thm:omin}), depending on the growth properties of $h$ (\prettyref{defn:growth}).

To define the exponential function, it is more convenient to first define a \textbf{logarithm} $\log:\K^{>0}\to \K$ and let $\exp$ be the compositional inverse $\log$. To this aim we start by putting $$\log(\omega^{\omega^x})=\omega^{h(x)}$$ for $x\in \K$ and $\log(1+\eps) = \sum_{n=1}^\infty (-1)^{n+1} \frac{\eps^n}{n}$ for $\eps\in o(1)$. Note that such infinite sums make sense in the $\kappa$-bounded Hahn field $\R((G))_\kappa$.

The extension of $\log$ to the whole of $\K^{>0}$ is then carried out guided by the principle that $\log$ takes products into sums and $\omega$ takes sums into products. We simply extend this to infinite sums. More precisely, $\log$ is determined by $\log(\omega^{\sum_{i<\alpha} \omega^{\gamma_i} r_i})=\sum_{i<\alpha}\omega^{h(\gamma_i)}r_i$ and $\log(\omega^xr(1+\eps))=\log(\omega^x)+\log(r) + \sum_{n=1}^\infty (-1)^{n+1} \frac{\eps^n}{n}$, where $\eps\in o(1)$ and $r\in \R$. Another way to express the spirit of the construction is that we first define $\log$ on the representatives of the multiplicative archimedean classes $\omega^{\omega^x}$, then we extend it
to the representatives of the additive archimedean classes $\omega^x$, and finally to the whole of $\K$. It is not difficult to show that this construction always yields an exponential field. We now need to show that there is at least one function $h$ such that the exponential field $\K$ arising from $\omega$ and $h$ as above is a model of $T_{\exp}$. A necessary condition is that the exponential map grows faster than any polynomial, or equivalently, that its inverse $\log$ grows slower than $x^{1/n}$ for all positive $n\in \N$. This translates into the condition $h(x) < r \cdot \omega^x$ for every $x\in \K$ and $r \in \R^{>0}$. We shall abbreviate the above with $h(x) \prec \omega^x$.

Since $\omega^x$ is discontinuous (its values are the respresentatives of the archimedean classes), and $h$ is continuous in the order topology of $\K$ (being a chain isomorphism from $\K$ to $\K^{>0}$), the existence of such an $h$ is not immediate. In the case of Gonshor's $h$ on the surreal numbers \cite{Gonshor1986}, the condition $h(x)\prec \omega^x$ is forced by the inductive definition of $h$. However, this cannot be generalized to our more general setting where similar inductive definitions make no sense, and we use instead a bootstrapping procedure (\prettyref{lem:h-exists}). Granted this, the final $\exp$ on $\K$ is easily seen to yield a model of $T_{\exp}$ using \cite{Ressayre1993,Dries1994} (\prettyref{thm:omega-exp}).

All the logarithms considered in this paper are \textbf{analytic} (\prettyref{defn:analytic-log}): for $\eps \in o(1)$, the function $x \mapsto \log(1+x)$ is given by the familiar Taylor expansion $\log(1+\eps) = \sum_{n=1}^\infty (-1)^{n+1} \frac{\eps^n}{n}$, whereas for $g \in G$, $\log(g)$ is a \emph{purely infinite} element of $\R((G))_\kappa$, and for $r \in \R$, $\log(r)$ is the usual real logarithm. 

Theorems \ref{thm:omega-log} and \ref{thm:omega-exp} produce analytic logarithms satisfying two additional restrictions: $\log(\omega^{\omega^x}) \in G$ for all $x \in \K$, and $\log$ brings ``infinite products'' to ``infinite sums''. It turns out, however, that all analytic logarithms arise in this way, up to changing the omega-map $\omega : \K \cong G$. More precisely, we have the following classification result. 

\begin{thm*}[\prettyref{cor:criterion-for-omega}]
  Every analytic logarithm on an omega-field of the form $\K = \R((G))_\kappa$ arises from some omega-map $x\mapsto \omega^x$ and some chain isomorphism $h : \K \cong \K^{>0}$.
\end{thm*}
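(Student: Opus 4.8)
The plan is to reconstruct an omega-map and a chain isomorphism $h$ directly from the given logarithm, by separating out the three ``layers'' on which an analytic logarithm lives. Write $\mathbb{I}\subseteq\K$ for the set of purely infinite elements, i.e.\ the Hahn series with support in $G^{>1}$; as an ordered abelian group $\mathbb{I}$ is nothing but $\R((G^{>1}))_\kappa$, and one has the decompositions $\K=\mathbb{I}\oplus\R\oplus o(1)$ (additively) and $\K^{>0}=G\cdot\R^{>0}\cdot(1+o(1))$ (multiplicatively). First I would check that any analytic logarithm $\log$ respects these decompositions: it is an order isomorphism $(\K^{>0},\cdot)\cong(\K,+)$ which by definition sends $G$ into $\mathbb{I}$, sends $\R^{>0}$ onto $\R$ by the ordinary logarithm, and sends $1+o(1)$ onto $o(1)$ by the Taylor series. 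From this I would deduce that $\lambda:=\log|_G$ is an isomorphism of ordered abelian groups $G\cong\mathbb{I}$ with inverse $\exp|_{\mathbb{I}}$. Only surjectivity of $\lambda$ onto $\mathbb{I}$ needs an argument: given $p\in\mathbb{I}$, write $\exp(p)=g\,r\,(1+\eps)$ with $g\in G$, $r\in\R^{>0}$, $\eps\in o(1)$, apply $\log$, and use uniqueness of the decomposition of $p=\log(g)+\log(r)+\log(1+\eps)$ to get $r=1$ and $\eps=0$, whence $\exp(p)=g\in G$.

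Next, since $\K$ is an omega-field, \prettyref{thm:criterion-for-omega} supplies a chain isomorphism $\mu\colon G\cong G^{>1}$. Extending $\mu$ coefficientwise to Hahn series yields an isomorphism of ordered abelian groups $\hat\mu\colon(\K,+,<)\cong(\mathbb{I},+,<)$. I would then put
\[
 \omega:=\lambda^{-1}\circ\hat\mu\colon(\K,+,0,<)\cong(G,\cdot,1,<),\qquad
 h:=\omega^{-1}\circ\mu\circ\omega\colon(\K,<)\cong(\K^{>0},<).
\]
Here $\omega$ is a genuine omega-map, being the composite of the group isomorphism $\hat\mu$ with $\lambda^{-1}=\exp|_{\mathbb{I}}$, and $h$ is a genuine chain isomorphism because $\omega$ maps $0$ to $1$ and hence $\K^{>0}$ onto $G^{>1}$.

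It remains to verify that the logarithm $\log'$ produced from this $\omega$ and this $h$ by the construction of \prettyref{thm:omega-log} is the original $\log$. By that construction $x\mapsto\log'(\omega^x)$ is the additive (``infinite product to infinite sum'') extension of the assignment $\omega^\gamma\mapsto\omega^{h(\gamma)}$ on monomials; since $\omega\circ h\circ\omega^{-1}=\mu$, this extension is precisely the coefficientwise map $\hat\mu$, so $\log'(\omega^x)=\hat\mu(x)$ for all $x\in\K$. On the other hand $\log(\omega^x)=\lambda(\omega(x))=\hat\mu(x)$ by the definition of $\omega$. Hence $\log'$ and $\log$ agree on $G$; they agree on $\R^{>0}$ and on $1+o(1)$ because both are analytic; and since both are group homomorphisms and $\K^{>0}=G\cdot\R^{>0}\cdot(1+o(1))$, they agree on all of $\K^{>0}$.

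The substantive step is the first paragraph: recognizing that $\log|_G$ is an isomorphism onto the purely infinite part $\mathbb{I}$, so that the omega-map we are looking for is necessarily $\exp|_{\mathbb{I}}$ precomposed with some isomorphism of ordered abelian groups $\K\cong\mathbb{I}$ — and such an isomorphism exists exactly because $\K$ is an omega-field, via a chain isomorphism $G\cong G^{>1}$. After that, producing $(\omega,h)$ and checking that it regenerates $\log$ is bookkeeping; the one point to watch is that the single chain isomorphism $\mu\colon G\cong G^{>1}$ is used both to form $\hat\mu$ and to form $h$, so that the ``additive extension'' occurring in \prettyref{thm:omega-log} really is the coefficientwise extension of $\omega\circ h\circ\omega^{-1}$.
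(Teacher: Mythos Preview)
Your proposal is correct and follows essentially the same route as the paper. In the paper the corollary is an immediate consequence of \prettyref{thm:criterion-for-omega}(2), whose proof defines $\omega^{\sum g_i r_i}=e^{\sum\psi(g_i)r_i}$ from a chain isomorphism $\psi:G\cong G^{>1}$ and then recovers $h$ via $\psi(\omega^x)=\omega^{h(x)}$; this is exactly your $\omega=\lambda^{-1}\circ\hat\mu$ and $h=\omega^{-1}\circ\mu\circ\omega$ in different notation. One minor redundancy: condition~(3) in \prettyref{defn:analytic-log} already asserts $\log(G)=\K^{\uparrow}$ as an equality, so your separate surjectivity argument for $\lambda$ is not needed.
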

The surreal numbers fit into the above picture if we allow $\kappa$ to be the proper class of all ordinals and $G$ to be the image of Conway's omega-map $x\mapsto \omega^x$. Gonshor's exponentiation is induced by the omega-map and Gonshor's function $h$ \cite{Gonshor1986}; by the above results, any other analytic logarithm on $\no$ arises in this way, possibly after replacing Conway's omega-map with another isomorphism from $\no$ to its group of monomials and Gonshor's $h$ with another chain isomorphism.

\section{Preliminaries}

\subsection{Valuations}
Let $\K$ be an ordered field (possibly with additional structure) and let $\vr \subseteq \K$ be a convex subring. Then $\vr$ is the valuation ring of a valuation $v$ and we denote by $o(1)$ the unique maximal ideal of $\vr$.
If $\K$ is real closed, it has a subfield $\rf\subseteq\K$ isomorphic to the residue field $\vr/o(1)$ of the valuation, namely we can write $\vr=\rf+o(1)$. We shall always assume in the sequel that $\K$ is real closed and $\vr,o(1),\rf$ are as above.
\begin{defn}
	For $x,y\in\K$ we define:
	\begin{itemize}
		\item $x\vleq y$ if $|x|\leq c|y|$ for some $c\in\vr$ (domination);
		\item $x\veq y$ if $x\vleq y$ and $y\vleq x$ (comparability);
		\item $x\vless y$ if $x\preceq y$ and $x\not\asymp y$ (strict domination);
		\item $x\sim y$ if $x-y\prec x$ ($x$ is asymptotic to $y$).
	\end{itemize}
\end{defn}
With these notations we have $\vr=\{x\suchthat x\vleq1\}$ and $o(1)=\{x\suchthat x\vless1\}$.
\begin{defn}
	A multiplicative subgroup $G$ of $\K^{<0}$ is a group of \textbf{monomials} if it consists in a family of representatives for each $\veq$-class. In other words a group of monomials is an embedded copy of the value group. It is well known that any real closed field admits a group of monomials.
\end{defn}

\begin{rem}
	For $x,y\in \K$ we have:
	\begin{itemize}
		\item $x\prec y$ if and only if $c|x|<|y|$ for all $c\in\vr$ (or
		equivalently for all $c\in\rf$);
		\item $x\veq y$ if and only if $x=cy(1+\eps)$ for some $c\in\rf^{\times}$ and
		$\eps\in o(1)$;
		\item $x\sim y$ if and only if $x=y(1+\eps)$ for some $\eps\in o(1)$.
		\item if $x\neq 0$ there are unique $r\in \rf^{\times},g\in G,\eps\in o(1)$ such that $x=gr(1+\eps)$.
	\end{itemize}
\end{rem}

\subsection{Hahn groups\label{sec:Hahn-groups}}

By a \textbf{chain} we mean a linearly ordered set. We describe a well
known procedure to build an ordered group starting from a chain.
\begin{defn}
  Given a chain $\Gamma$ and an ordered abelian group $(C,+,<)$, the \textbf{$\Gamma$-product of} $C$ is the abelian group of all functions $f:\Gamma\to C$ with \emph{reverse} well-ordered \textbf{support} $\{\gamma\in\Gamma\suchthat f(\gamma)\neq0\}$ and pointwise addition, ordered by declaring $f>0$ if $f(\gamma)>0$, where $\gamma$ is the \emph{biggest} element in the support.\footnote{Other authors prefer to use well-ordered supports, but one can pass from one version to the other reversing the order of $\Gamma$.}

  If we write $G$ in additive notation, a typical element of $G$ can be written in the form $\sum_{\gamma\in\Gamma}\gamma r_{\gamma}$, representing the function sending $\gamma\in\Gamma$ to $r_{\gamma}\in C$, while $G$ itself is denoted $\sum \Gamma C$. We prefer however to use a multiplicative notation and write $G$ as $\prod t^{\Gamma C}$ and a typical element of  $G$ as $\prod_{\gamma \in \Gamma} t^{\gamma r_\gamma}$. In this notation the multiplication is given by
\[
\left(\prod_{\gamma\in\Gamma}t^{\gamma r_{\gamma}}\right)\left(\prod_{\gamma\in\Gamma}t^{\gamma r'_{\gamma}}\right)=\prod_{\gamma\in\Gamma}t^{\gamma(r_{\gamma}+r'_{\gamma})}
\]
Since the supports are reverse well-ordered, we can fix a decreasing
enumeration $(\gamma_{i}\suchthat i<\alpha)$ of the support, where $\alpha$ is an ordinal, and write
an element of $\prod t^{\Gamma C}$ in the form
\[
f=\prod_{i<\alpha}t^{\gamma_{i}r_{i}}\in\prod t^{\Gamma C}.
\]
According to our conventions, $f>1\iff r_{0}>0$ and $t^{\gamma}>t^{\beta}\iff\gamma>\beta.$

If $\Gamma$ has only one element, we may identify $\prod t^{\Gamma C}$ with a multiplicative copy $t^{C}$ of $(C,+,<)$.

\end{defn}
	When $C=(\R,+,<)$, we obtain the \textbf{Hahn group} over
$\Gamma$, which can be characterized as a maximal ordered group with a set of archimedean classes of the same order type as $\Gamma$ \cite{Hahn1907}. Recall that two positive elements are in the same archimedean class if each of them is bounded, in absolute value, by an integer multiple of the other.

\begin{notation}
  Let $\kappa$ be a regular cardinal. If in the definition of the $\Gamma$-product we only allow supports of reverse order type $<\kappa$, we obtain the $\kappa$-bounded version
  \[
    \left(\prod t^{\Gamma C}\right)_{\kappa} \subseteq\prod t^{\Gamma C}.\]
  We shall also consider the case when $\Gamma$
  is a proper class and $\kappa=\on$, in which case $\left(\prod t^{\Gamma C}\right)_{\on}$
  is the ordered group of all functions $f:\Gamma\to C$ whose support
  is a reverse well ordered \emph{set} (rather than a reverse well ordered class).
\end{notation}

\subsection{Hahn fields}
\label{sec:assumptions}

Given a field $\rf$ and a multiplicative ordered abelian group $G$,
let $\rf((G))$ denote the Hahn field with coefficients in
$\rf$ and monomials in $G$. The underlying additive group of $\rf((G))$ coincides with the $G$-product of $\rf$: its elements are functions $f:G\to\rf$
with reverse well-ordered supports, which we write either in the form
$f=\sum_{g\in G}gr_{g}$, where $r_{g}=f(g)$, or in in the form
\[
f=\sum_{i<\alpha}g_{i}r_{i}
\]
where $\alpha$ is an ordinal, $(g_{i})_{i<\alpha}$ is a decreasing
enumeration of the support, and $r_{i}=f(g_{i})\in\rf^{*}$. Addition
is defined componentwise and multiplication is given by the usual
Cauchy product. We order $\rf((G))$ according to the sign of the
leading coefficient, namely $f>0\iff r_{0}>0$.
\begin{rem}
  It can be proved that if $\rf$ is real closed
  and $G$ is divisible, then $\rf((G))$ is real closed \cite{Kaplansky1942}.
  Moreover, $\rf((G))$ is \textbf{spherically complete}: any decreasing
  intersection of valuation balls has a non-empty intersection.
\end{rem}
\begin{notation}
	Inside $\rf((G))$, we let $\vr$ be the valuation ring of all the elements
	$x$ with $|x|\leq r$ for some $r\in\rf$, and $o(1)$ be the
	corresponding maximal ideal. We then have $\vr=\rf+o(1)$. With respect to this valuation ring,
	$\rf$ is a copy of the residue field and $G$ is a group of monomials. We shall use similar  notations for any subfield $\K\subseteq \rf((G))$ containing $\rf$ and $G$.
\end{notation}

\subsection{Restricted analytic functions}

A family $(f_{i})_{i\in I}$ of elements of $\rf((G))$
is \textbf{summable} if the union of the supports of the elements
$f_{i}$ is reverse well-ordered and, for each $g\in G$, there are
only finitely many $i\in I$ such that $g$ is in the support of $f_{i}$.
In this case $\sum_{i\in I}f_{i}$ is defined as the element $f=\sum_g gr_g$ of $\rf((G))$ whose coefficients are given by $r_g=\sum_{i\in I}r_{g,i}\in\rf$ where $r_{g,i}$ is
the coefficient of $g$ in $f_{i}$. This makes sense since,  given $g\in G$, only finitely many $r_{g,i}$ are non-zero.

By Neumann's lemma \cite{Neumann1949} for any power series $P(x)=\sum_{n\in\N}a_{n}x^{n}$
with coefficients in $\rf$ and for any $\eps\prec1$ in $\rf((G))$,
the family $(a_{n}\eps^{n})_{n\in\N}$ is summable, so we can evaluate
$P(x)$ at $\eps$ obtaining an element $P(\eps)=\sum_{n\in\N}a_{n}\eps^{n}$
of $\rf((G))$. Similar considerations apply to power series in several variables.

\begin{defn}\label{defn:analytic subfield}
	Let $\K\subseteq\rf((G))$ be a subfield. We say that $\K$ is an
	\textbf{analytic subfield} if
	\begin{enumerate}
		\item $\K$ is truncation closed: if $\sum_{i<\alpha}g_{i}r_{i}$ belongs
		to $\K$, then $\sum_{i<\beta}g_{i}r_{i}$ belongs to $\K$ for every
		$\beta\leq\alpha$;
		\item $\K$ contains $\rf$ and $G$;
		\item If $P(x)=\sum_{n\in\N}a_{n}x^{n}$ is a power series with coefficients
		in $\rf$ and $\eps\prec1$ is in $\K$, then the element $P(\eps)=\sum_{n\in\N}a_{n}\eps^{n}\in\rf((G))$
		lies in the subfield $\K$. Similarly for power series in several variables.
\end{enumerate}
\end{defn}

We recall that $T_{an}$ is the theory of the real field with all analytic functions restricted to the poly-intervals $[-1,1]^n\subseteq \K^n$ \cite{Dries1994}. (By rescaling, we can equivalently use any other closed poly-interval.)

\begin{fact}\label{fact:an}
We have:
\begin{enumerate}
	\item The field $\R((G))$ admits a natural intepretation of the analytic functions restricted to the poly-interval $[-1,1]^n\subseteq \K$, making $\K$ into a model of $T_{an}$.
	\item The same holds for any analytic subfield of $\R((G))$, and in particular for $\R((G))_\kappa$ for every regular uncountable $\kappa$.
	\item More generally, if $\rf$ is a model of $T_{an}$, then any analytic subfield $\K$ of $\rf((G))$ is naturally a model of $T_{an}$.
\end{enumerate}
\end{fact}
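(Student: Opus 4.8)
The plan is to interpret the restricted analytic functions on $\R((G))$ by Taylor expansion around residues, and then to invoke the known model theory of $T_{an}$ to conclude. Consider first statement (1), where we may assume $G$ divisible so that $\R((G))$ is real closed (Kaplansky's theorem, as recalled above). Fix a restricted analytic $f\colon[-1,1]^n\to\R$; by definition $f$ is the restriction of a function analytic on an open neighbourhood of $[-1,1]^n$, so for each $c\in[-1,1]^n$ the Taylor series $\sum_\beta\frac{\partial^\beta f(c)}{\beta!}(y-c)^\beta$ has positive radius of convergence. Given $b=(b_1,\dots,b_n)\in\R((G))^n$: if $|b_i|>1$ for some $i$ we set $f(b):=0$; otherwise every $b_i$ lies in $\vr$, so we write $b_i=c_i+\eps_i$ with $c_i\in\R$, $|c_i|\le1$ and $\eps_i\in o(1)$, and put
$$f(b):=\sum_\beta\frac{\partial^\beta f(c)}{\beta!}\,\eps^\beta ,$$
which is a summable family in $\R((G))$ by the several-variable form of Neumann's lemma recalled above, since each $\eps_i\prec1$; it equals $f(c)$ modulo $o(1)$, so in particular $f(b)\in\vr$. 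One checks directly that this interpretation extends the real one, agrees with polynomial evaluation when $f$ is a polynomial, is compatible with the ordering, and — a composite of convergent power series being the Taylor expansion of the composite — is compatible with composition; so the inclusion $\R\hookrightarrow\R((G))$ respects the restricted analytic functions as well as the field operations and the order.

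The crux is to check that $\R((G))$ with this interpretation satisfies the axioms of $T_{an}$, and here I would lean on the quantifier elimination and the explicit axiomatisation of $T_{an}$ in \cite{Dries1994}. Besides the real-closed-field axioms (Kaplansky) and the axioms forcing $f$ to vanish outside $[-1,1]^n$ (true by construction), these amount to identities expressing the Weierstrass-system structure of the family of restricted analytic functions: stability under restriction, partial derivatives, composition, and implicit-function (Weierstrass) solutions. Such identities transfer from $\R$ coefficient-wise: an identity valid on $[-1,1]^n$ holds, by the identity theorem, on a whole neighbourhood of the box, so evaluating both sides at $b$ with all $|b_i|\le1$ and Taylor-expanding around the residue $c$ of $b$ produces the same generalised power series, while for $b$ with some $|b_i|>1$ both sides collapse to $0$. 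The genuinely non-trivial axioms are the existential ones — the real-closedness clauses and the assertions that suitable Weierstrass/implicit-function equations are solvable; these hold because $\R((G))$ is spherically complete (as recalled above), so such equations can be solved by successive approximation. This is the step I expect to be the main obstacle; granted it, (1) follows.

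For (2), let $\K\subseteq\R((G))$ be an analytic subfield. Condition~(3) of \prettyref{defn:analytic subfield} is precisely closure under the power-series evaluations used to define $f(b)$, so $f(b)\in\K$ whenever $b\in\K^n$, and the $T_{an}$-structure on $\R((G))$ restricts to $\K$; using truncation-closedness together with this power-series closure in place of spherical completeness — equivalently, checking by Tarski--Vaught that $\K$ is an elementary substructure of $\R((G))$ — one sees that $\K$ again satisfies the $T_{an}$ axioms. In particular this covers $\K=\R((G))_\kappa$ for $\kappa$ regular uncountable, which is an analytic subfield. Finally (3) follows by rerunning the argument with $\R$ replaced by an arbitrary model $\rf$ of $T_{an}$: the residues $c_i$ now lie in $\rf$, the values $\partial^\beta f(c)$ are computed in $\rf$'s own $T_{an}$-structure, Neumann's lemma and the Taylor-expansion identities are unaffected by the change of coefficient field (the latter being part of $T_{an}$ and hence valid in $\rf$), and the verification of the axioms goes through verbatim.
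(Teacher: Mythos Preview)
Your approach is essentially the paper's: interpret the restricted analytic functions by Taylor expansion around residues and then appeal to the quantifier-elimination/axiomatisation of $T_{an}$ from \cite{Dries1994}. The paper's proof is in fact just a citation of \cite{Dries1994} together with a description of the Taylor-expansion interpretation, so you have correctly identified both ingredients.

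That said, two parts of your sketch are off-target. First, spherical completeness is a red herring. The point of \cite{Dries1994} is that $T_{an}$ admits a \emph{universal} axiomatisation in the language $L_{an}$ expanded by a symbol for restricted division $^{-1}$ (equivalently, $T_{an}$ has QE in that language). Once you have checked real-closedness (Kaplansky) and that the Taylor interpretation respects the universal analytic identities, there are no residual existential axioms to verify by successive approximation; the Weierstrass data are named by function symbols, so the relevant statements are already universal. Second, and for the same reason, the passage to an analytic subfield in (2) does not need Tarski--Vaught or truncation-closedness: condition~(3) of \prettyref{defn:analytic subfield} ensures $\K$ is an $L_{an}(^{-1})$-substructure of $\R((G))$, and any substructure of a model of a universally axiomatised theory is again a model (and then elementary by model completeness). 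Truncation-closedness plays no role here. With these corrections your write-up matches the paper's intended argument, merely spelling out what the paper leaves to the citation.
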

The proof of (1) is in \cite{Dries1994} and is based on a quantifier elimination result in the language of $T_{an}$. The other points follow by the same argument. We interpret the restricted analytic functions in the analytic subfield $\K\subseteq \rf((G))$ as follows. Given a real analytic function $f$ converging on a neighbourhood of $[-1,1]^n \cap \R^n$, we need to define $f(x+\eps)$ where $x\in [-1,1]^n \cap \rf^n$ and $\eps\in o(1)^n \subseteq \K^n$. We do this by using the Taylor expansion $f(x+\eps) = \sum_i \frac{D^i f(x)}{i!} \eps^i$ where $i$ is a multi-index in $\N^n$. Here $D^i f(x)\in \rf$ (using the fact that $\rf$ is a model of $T_{an}$) and the infinite sum is in the sense of the Hahn field $\rf((G))$.

\subsection{Exponential fields}
A \textbf{prelogarithm} on a real closed field $\K$ is a morphism from $(\K^{>0},\cdot,1,<)$ to $(\K,+,0,<)$ and a \textbf{logarithm} is a surjective prelogarithm. An \textbf{exponential map} is the compositional inverse of a logarithm, that is an isomorphism from $(\K,+,0,<)$ to $(\K^{>0},\cdot,1,<)$. We say that $\K$ is an \textbf{exponential} field if it has an exponential map. Given a logarithm $\log$, we write $\exp$ for the corresponding exponential map and we write $e^x$ instead of $\exp(x)$ when convenient. Now assume $\rf$  has a logarithm and consider the Hahn field $\rf((G))$. It turns out that if $G\neq1$, $\rf((G))$ never admits a logarithm extending  that on $\rf$ \cite{Kuhlmann1997}. On the other hand if $\kappa$ is a regular uncountable cardinal, then for suitable choices of $G$, the logarithm on $\rf$ can be extended to $\rf((G))_\kappa$, and when $\rf = \R$ this can be done in such a way that $\rf((G))_\kappa$ is a model of $T_{\exp}$ \cite{Kuhlmann2005}.

\begin{defn}
	\label{defn:analytic-log}Let $\rf$ be an exponential
	field and let $\K$ be an analytic subfield of $\rf((G))$, for instance $\K = \rf((G))_\kappa$ with $\kappa$ regular uncountable. An \textbf{analytic
		logarithm} on $\K$ is a logarithm $\log:\K^{>0}\to\K$ with the following
	properties:
	\begin{enumerate}
		\item $\log:\K^{>0}\to\K$ extends the given logarithm on $\rf$.
		\item $\log(1+\eps)=\sum_{i=1}^{\infty}\frac{(-1)^{i+1}}{i}\eps^{i}$ for
		all $\eps\prec1$ in $\K$ (the assumption $\eps\prec 1$ ensures the summability).
		\item $\log(G)=\K^{\uparrow}$, where $\K^{\uparrow}:=\rf((G^{>1}))\cap\K$
		is the group of \textbf{purely infinite} elements, namely the
		series of the form $\sum_{i<\alpha}g_{i}r_{i}$ with $g_{i}\in G^{>1}$
		for all $i$.
	\end{enumerate}
\end{defn}

Conditions (1) and (2) are rather natural, and ensure that the restrictions of $\log(1+x)$ to small finite intervals agree with the natural $T_{an}$-interpretations of \prettyref{fact:an}. A motivation for (3) is the following. The multiplicative group
	$\K^{>0}$ admits a direct sum decomposition
	\[
	\K^{>0}=G\rf^{>0}(1+o(1)),
	\]
	namely any element $x$ of $\K^{>0}$ can be uniquely written in the
	form $x=gr(1+\eps)$ where $r\in\rf^{>0}$, $g\in G$ and $\eps\in o(1)$.
	Applying $\log$ to both sides of the above equation, we get (by (1) and (2)) a direct sum
	decomposition
	\[
	\K=\log(G)\oplus\rf \oplus o(1)
	\]
	of the additive group $(\K,+)$. Indeed
	by (1) we have $\log(\rf^{>0})=\rf$ and $\log(\K^{>0}) = \K$, while (2) implies that the logarithm maps $1+o(1)$ bijectively to $o(1)$ with inverse given by
	$
	\exp(\eps)=\sum_{n\in\N}\frac{\eps^{n}}{n!}
	$.
	We have thus proved that $\log(G)$ is a direct complement of
	$\vr=\rf+o(1)$. Although there are several choices for such a complement, the most natural one is $\log(G)=\K^{\uparrow}$, as required in point (3), since it is the unique one closed under truncations.

\subsection{Growth axiom and models of $T_{\exp}$}
Ressayre proved in \cite{Ressayre1993} that an exponential field is a model of $T_{\exp}$ if and only it satisfies the elementary properties of the real exponential restricted to $[0,1]$ and satisfies the growth axiom scheme $x\geq n^2 \to \exp(x)>x^n$ for all $n\in \N$.
\begin{defn}
	\label{defn:growth}Given an analytic subfield $\K\subseteq\rf((G))$, we say that an analytic logarithm $\log:\K^{>0}\to \K$ satisfies the \textbf{growth axiom at infinity} if $\log(x)<x^{1/n}$
	for all $x>\rf$ and all positive integers $n$.
\end{defn}
\begin{prop}\label{prop:GAlog}
	\label{fact:an-exp}If $\rf$ is a model of $T_{an,exp}$ (for instance $\rf=\R$) and $\K\subseteq\rf((G))$
	is an analytic subfield with an analytic logarithm satisfying the
	growth axiom at infinity, then $\K$ (with the natural intepretation of the symbols) is a model of $T_{an,\exp}$.
\end{prop}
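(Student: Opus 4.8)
The plan is to verify the three ingredients of the Ressayre-style axiomatization of $T_{an,\exp}$ for $\K$ equipped with $\exp:=\log^{-1}$: that $\K$ is a model of $T_{an}$, that $\exp$ agrees on the interval $[-1,1]$ with the restricted analytic exponential already present in the $T_{an}$-structure, and that $\exp$ satisfies the growth axiom scheme $x\ge n^2\to\exp(x)>x^n$ for all $n\in\N$. By \cite{Ressayre1993,Dries1994} these conditions characterise, among ordered fields carrying a total exponential, the models of $T_{an,\exp}$, so they suffice.

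The first point is immediate from \prettyref{fact:an}(3): since $\rf\models T_{an,\exp}$ we have in particular $\rf\models T_{an}$, hence the analytic subfield $\K\subseteq\rf((G))$ is a model of $T_{an}$ with its natural interpretation of the restricted analytic functions. For the second point, fix $x\in[-1,1]\cap\K$ and write $x=r+\eps$ with $r\in[-1,1]\cap\rf$ and $\eps\in o(1)$. By condition (1) of \prettyref{defn:analytic-log}, $\log$ extends the logarithm of $\rf$, so $\exp$ extends the exponential of $\rf$ and $\exp(r)$ is the value computed in $\rf$; by condition (2) of \prettyref{defn:analytic-log}, $\exp(\eps)=\sum_{k\ge 0}\eps^k/k!$. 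Since $\exp$ is a homomorphism, $\exp(x)=\exp(r)\exp(\eps)=\sum_{k\ge 0}\frac{\exp(r)}{k!}\eps^k$. On the other hand, the recipe of \prettyref{fact:an} assigns to the restricted analytic function $\exp$ at $x=r+\eps$ the Taylor expansion $\sum_k\frac{D^k\exp(r)}{k!}\eps^k$, and $D^k\exp=\exp$, so $D^k\exp(r)=\exp(r)\in\rf$ is again the value of the exponential of $\rf$ (using $\rf\models T_{an,\exp}$, so that restricted and total $\exp$ agree on $[-1,1]$ there). The two series coincide, so $\exp$ restricted to $[-1,1]\cap\K$ is the $T_{an}$-function $\exp$.

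It remains to check the growth axiom. Fix $n\ge 1$ and $x\in\K$ with $x\ge n^2$; since $\exp$ and $\log$ are increasing mutually inverse bijections and $\log$ is a group homomorphism, and since $x^n>0$, the inequality $\exp(x)>x^n$ is equivalent to $x>\log(x^n)=n\log x$. If $x$ is infinite, i.e.\ $x>\rf$, then by \prettyref{defn:growth} applied with exponent $n+1$ we have $0<\log x<x^{1/(n+1)}$. The map $t\mapsto t^{n/(n+1)}$ is an order-preserving bijection of $\K^{>0}$ restricting to a bijection of $\rf^{>0}$ (as $\rf$ is real closed), so $x>\rf^{>0}$ gives $x^{n/(n+1)}>\rf^{>0}$, in particular $x^{n/(n+1)}>n$; hence $x=x^{n/(n+1)}\cdot x^{1/(n+1)}>n\,x^{1/(n+1)}>n\log x$, as wanted. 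If instead $x$ is finite, write $x=r+\eps$ with $r\in\rf$, $\eps\in o(1)$; then $r\ge n^2$ (otherwise $n^2-r$ would be a positive real dominating $\eps$, contradicting $x\ge n^2$). As $r\ge n^2\ge 1$ we have $\eps/r\in o(1)$, and by conditions (1) and (2) of \prettyref{defn:analytic-log}, $\log x=\log r+\log(1+\eps/r)=\log r+\delta$ with $\log r\in\rf$ and $\delta\in o(1)$. Since $\rf\models T_{an,\exp}$ it models $T_{\exp}$, whose growth axiom gives $\exp(r)>r^n$ in $\rf$, i.e.\ $r>n\log r$ in $\rf$; thus $x-n\log x=(r-n\log r)+(\eps-n\delta)$ is the sum of a positive element of $\rf$ and an element of $o(1)$, hence positive. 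In either case $x>n\log x$, so $\exp(x)>x^n$.

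Having established all three conditions, \cite{Ressayre1993,Dries1994} yields that $\K$, with the natural interpretation of the symbols, is a model of $T_{an,\exp}$. The main obstacle is not any single computation but the bookkeeping in the second point: one has to unwind the definition, via \prettyref{fact:an}, of the $T_{an}$-interpretation of the restricted exponential on $\K$ and match it, using precisely conditions (1) and (2) of \prettyref{defn:analytic-log}, with the analytic $\exp=\log^{-1}$ produced by the logarithm — and one must take care to invoke the axiomatization of $T_{an,\exp}$ rather than merely that of $T_{\exp}$.
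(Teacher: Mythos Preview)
Your proof is correct and follows essentially the same approach as the paper's: verify that $\K$ models $T_{an}$ via \prettyref{fact:an}, check that $\exp$ restricted to $[-1,1]$ agrees with the $T_{an}$-interpretation of the restricted exponential (using conditions (1) and (2) of \prettyref{defn:analytic-log}), establish the growth axiom by splitting into the infinite case (handled by the growth axiom at infinity) and the finite case (handled by $\rf\models T_{\exp}$), and then invoke the axiomatization of \cite{Ressayre1993,Dries1994}. The paper's own proof is simply more terse, leaving the explicit case split in the growth-axiom verification and the Taylor-expansion bookkeeping to the reader.
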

\begin{proof} This follows from \cite{Ressayre1993,Dries1994} but we include some details.
	The inverse $\exp$ of an analytic logarithm is easily seen to satisfy $e^{\eps} = \sum_{n=0}^{\infty} \frac{\eps^n}{n!}$ for all $\eps\in o(1)$. Since moreover $\exp$ extends the given exponential on $\rf$, it follows that the restriction of $\exp$ to $[-1,1]$ agrees with the natural $T_{an}$-interpretation of \prettyref{fact:an}. This shows that $\K$ is at least a model of $T_{\exp|[-1,1]}$, as it is in fact the restriction of a model of $T_{an}$ to a sublanguage. Since the interpretation of $\exp$ grows faster than any polynomial (by the growth axiom at infinity plus the fact that $\rf$ is a model of $T_{\exp}$), we can conclude by the axiomatisations of \cite{Ressayre1993,Dries1994}.
\end{proof}

The above result rests on the quantifier elimination result for
$T_{an,\exp}$. We do not know whether it suffices that $\rf$ is
a model of $T_{\exp}$ to obtain that $\rf((G))_\kappa$ is a model of $T_{\exp}$ (or even $T_{\exp|[0,1]}$).

\section{Omega fields}
\begin{defn}
	A real closed field $(\K,+,\cdot,<)$ with a convex valuation ring $\vr$ and corresponding group of monomials $G\subseteq \K^{>0}$ is an {\bf omega-field} if $(\K,+,<)$ is isomorphic to $(G,\cdot, <)$ as an ordered group. Given an omega-field $\K$ we shall call \textbf{omega-map} any isomorphism of ordered groups $$\omega:(\K,+,0,<)\cong(G,\cdot,1,<).$$
\end{defn}
Since the group $G$ of monomials is isomorphic to the value group of $\K$, we have that $\K$ is an omega-field if and only if $(\K,+,<)$ is isomorphic to its value group. The definition of omega-map is inspired by Conway's omega map $\omega^x$ on the surreal numbers. We recall that the surreals can be presented in the form $\no=\R((\omega^{\no}))_{\on}$, with the image of the omega-map being the group $\omega^{\no}$ of monomials. The subscript $\on$ indicates that we only consider series whose support is a set, rather than a proper class. The surreals should thus be considered as a bounded Hahn field rather than a full Hahn field.

\subsection{Construction of omega-fields}
In the sequel let $\kappa$ be a regular uncountable cardinal.
\begin{thm}\label{thm:constructing-omega-fields}
	Given an exponential field $\rf$, there is a group $G$ such that the field $\K = \rf((G))_\kappa$ admits an omega-map $\omega:\K\to G$.
\end{thm}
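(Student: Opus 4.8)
The plan is to construct $G$ as a Hahn-type group built from a chain $\Gamma$ whose only required property is that it be isomorphic, as a chain, to the underlying chain of the $\kappa$-bounded Hahn field $\rf((G))_\kappa$ we are about to build — that is, we need to solve a fixed-point equation at the level of chains and then verify that the resulting algebraic structures match up. Concretely, recall from Section~\ref{sec:Hahn-groups} that a Hahn group $\prod t^{\Gamma\R}$ (with real exponents) is the maximal ordered group with chain of archimedean classes order-isomorphic to $\Gamma$, and that there is a $\kappa$-bounded variant $(\prod t^{\Gamma\R})_\kappa$. So I would aim to find a chain $\Gamma$ together with a chain isomorphism between $\Gamma$ and the chain of monomials of $\rf((G))_\kappa$, where $G = (\prod t^{\Gamma\R})_\kappa$; then the omega-map is essentially forced.

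First I would set up the iteration. Let $\Gamma_0$ be a one-element chain and, given $\Gamma_n$, let $G_n := (\prod t^{\Gamma_n \R})_\kappa$ and let $\Gamma_{n+1}$ be the chain $(G_n, <)$ — i.e.\ the chain of monomials at stage $n$. There are natural order-embeddings $\Gamma_n \hookrightarrow \Gamma_{n+1}$ (sending $\gamma$ to the monomial $t^{\gamma}$), hence embeddings $G_n \hookrightarrow G_{n+1}$, and I would take $\Gamma := \varinjlim \Gamma_n$ and $G := (\prod t^{\Gamma\R})_\kappa$. The key point is that $\kappa$ is regular uncountable, so a union of a countable increasing chain of sets with supports of size $<\kappa$ still has all supports of size $<\kappa$; this is exactly the place where $\kappa$ uncountable is used, and it guarantees that $G = \varinjlim G_n$ and that $\Gamma \cong (G,<)$ as chains, since every element of $G$ already lives in some $G_n$ and hence corresponds to an element of $\Gamma_{n+1}\subseteq\Gamma$. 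Thus we obtain a chain isomorphism $j : (G,<) \xrightarrow{\ \sim\ } \Gamma$.

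Next I would promote $j$ to the required group isomorphism. Set $\K := \rf((G))_\kappa$. Its additive group is the $\Gamma'$-product of $\rf$ where $\Gamma' = (G,<)$; composing with $j$ identifies $(\K,+,<)$ with the $\Gamma$-product of $\rf$. On the other hand $G = (\prod t^{\Gamma\R})_\kappa$ is, by definition, a multiplicative copy of the $\Gamma$-product of $(\R,+,<)$. When $\rf = \R$ these are literally the same group and we are done; for general exponential $\rf$ one observes that $(\rf,+,<)$ and $(\R,+,<)$ need not be isomorphic, so instead I would build $G$ using exponents in $\rf$ rather than in $\R$, i.e.\ $G := (\prod t^{\Gamma\,\rf})_\kappa$ throughout the construction (the maximality/archimedean-class bookkeeping goes through verbatim since $\rf$ is in particular a divisible ordered abelian group, and divisibility of $\rf$ also ensures $\rf((G))_\kappa$ is real closed). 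With this choice the $\Gamma$-product of $(\rf,+,<)$ \emph{is} $G$ on the nose, giving the isomorphism $\omega : (\K,+,0,<) \cong (G,\cdot,1,<)$. One should also double check that the valuation ring $\vr$ and residue field coincide with the intended ones, which is immediate from the Notation in Section~\ref{sec:assumptions}.

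I expect the main obstacle to be purely set-theoretic bookkeeping rather than algebra: verifying that the direct limit $\Gamma = \varinjlim \Gamma_n$ has the property that $(\prod t^{\Gamma\rf})_\kappa$ has chain of monomials exactly $\Gamma$ again — equivalently that no "new" monomials with supports of size $<\kappa$ appear in the limit beyond those already accounted for. This is where regularity of $\kappa$ is essential (a support of size $<\kappa$ is cofinal in at most one $\Gamma_n$ worth of data only because $\mathrm{cf}(\kappa) > \omega$), and it is the step I would write out most carefully; the identification of the additive and multiplicative structures afterwards is then formal.
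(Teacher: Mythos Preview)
Your overall strategy---build $G$ as a Hahn group over a chain $\Gamma$ obtained as the limit of a tower $\Gamma_0 \hookrightarrow \Gamma_1 \hookrightarrow \cdots$ with $\Gamma_{n+1} \cong (G_n,<)$, then read off the omega-map from the resulting chain isomorphism $\Gamma \cong (G,<)$---is exactly the paper's approach, and your observation that the exponents should be taken in $\rf$ rather than $\R$ is also correct. The problem is the \emph{length} of the iteration.

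You iterate only $\omega$ times and then assert that $G = (\prod t^{\Gamma\rf})_\kappa$ coincides with $\varinjlim_n G_n$. This is false. Note that under your embedding $\gamma\mapsto t^\gamma$, the image of $\Gamma_n$ in $\Gamma_{n+1}=(G_n,<)$ lies entirely in $G_n^{>1}$, so for every $n$ there are elements of $\Gamma_{n+1}\setminus\Gamma_n$ below the whole image of $\Gamma_n$. Pick such elements to get a strictly decreasing sequence $\gamma_0>\gamma_1>\cdots$ in $\Gamma=\bigcup_n\Gamma_n$ with $\gamma_n\in\Gamma_{n+1}\setminus\Gamma_n$. Then $\prod_{n<\omega} t^{\gamma_n}$ is an element of $(\prod t^{\Gamma\rf})_\kappa$ (countable support, well below $\kappa$) whose support is contained in no $\Gamma_n$; hence $G\supsetneq\varinjlim_n G_n$, and you have no isomorphism $\Gamma\cong(G,<)$.

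Your appeal to regularity is misdirected: regularity of $\kappa$ says that a subset of $\bigcup_{\alpha<\kappa}\Gamma_\alpha$ of size $<\kappa$ is contained in some $\Gamma_\alpha$ \emph{when the union is indexed by $\kappa$}, not when it is indexed by $\omega$. The fix, which is what the paper does, is to run the tower for $\kappa$ many steps (taking direct limits at limit ordinals). At stage $\kappa$ the regularity argument you had in mind finally applies: any element of $H(\Gamma_\kappa)=(\prod t^{\Gamma_\kappa\rf})_\kappa$ has support of size $<\kappa$ in $\Gamma_\kappa=\bigcup_{\beta<\kappa}\Gamma_\beta$, hence lies in some $\Gamma_\beta$, hence comes from $H(\Gamma_\beta)$ and is hit by the map from $\Gamma_{\beta+1}$. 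That is the only adjustment needed; the rest of your outline then goes through.
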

When $\rf=\R$ one can take $G = \no(\kappa)$ as in \cite{DriesE2001}. In the general case the proof is a variant of a similar construction in \cite{Kuhlmann2005}. Given a chain $\Gamma$ and an additive ordered group $C$ (in our application $C=(\rf,+,<)$), let $H(\Gamma)$ denote, in the following Lemma, the ordered group $\left(\prod t^{\Gamma C} \right)_{\kappa}$.
\begin{lem}\label{lem:extending-eta}
Fix a chain $\Gamma_{0}$ and a chain embedding $\eta_{0}:\Gamma_{0}\to H(\Gamma_{0})$ (for instance $\eta_0(\gamma) = t^\gamma$). Then there is a chain $\Gamma\supseteq \Gamma_0$
	and a chain isomorphism
	$
	\eta:\Gamma\cong H(\Gamma)
	$
	extending $\eta_0$.
\end{lem}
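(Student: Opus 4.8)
The plan is to build $\Gamma$ as a direct limit of an increasing chain of linear orders $\Gamma_0 \subseteq \Gamma_1 \subseteq \Gamma_2 \subseteq \cdots$ together with compatible chain embeddings $\eta_n : \Gamma_n \to H(\Gamma_n)$, arranging matters so that in the limit the embedding becomes onto. First I would observe that the functor $H$ is monotone in the sense that if $\Gamma \subseteq \Gamma'$ is an inclusion of chains, then $H(\Gamma) = \left(\prod t^{\Gamma C}\right)_\kappa$ embeds canonically into $H(\Gamma') = \left(\prod t^{\Gamma' C}\right)_\kappa$ as an ordered subgroup, by extending each support by zeros; moreover this embedding is compatible with the generators $t^\gamma$. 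So at each stage I set $\Gamma_{n+1} := H(\Gamma_n)$, viewed as a chain extending $\Gamma_n$ via $\eta_n$ (identifying $\gamma \in \Gamma_n$ with $\eta_n(\gamma) \in H(\Gamma_n)$), and I let $\eta_{n+1} : \Gamma_{n+1} = H(\Gamma_n) \hookrightarrow H(\Gamma_{n+1})$ be the canonical inclusion just described. By construction $\eta_{n+1}$ extends $\eta_n$, because on $\Gamma_n$ it is the composite $\Gamma_n \xrightarrow{\eta_n} H(\Gamma_n) \hookrightarrow H(\Gamma_{n+1})$.

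Then I would set $\Gamma := \bigcup_{n<\omega} \Gamma_n$ with the induced linear order (a union of an increasing chain of linear orders is a linear order), and I claim $H(\Gamma) = \bigcup_{n<\omega} H(\Gamma_n)$ as ordered groups: any element of $H(\Gamma)$ has support a reverse-well-ordered set of size $<\kappa$, hence (since $\kappa$ is regular uncountable and $\omega < \kappa$) its support is already contained in some single $\Gamma_n$, so the element lies in $H(\Gamma_n)$; conversely each $H(\Gamma_n)$ embeds into $H(\Gamma)$ compatibly. The maps $\eta_n$ are compatible, so they glue to a chain embedding $\eta : \Gamma \to H(\Gamma)$. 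Finally $\eta$ is surjective: given $f \in H(\Gamma)$, we have $f \in H(\Gamma_n) = \Gamma_{n+1} \subseteq \Gamma$ for some $n$, and on $\Gamma_{n+1}$ the map $\eta$ restricts to $\eta_{n+1}$, which sends the element $f$ of $\Gamma_{n+1} = H(\Gamma_n)$ to its own image under the canonical inclusion $H(\Gamma_n) \hookrightarrow H(\Gamma_{n+1}) \subseteq H(\Gamma)$, i.e.\ to $f$ itself. Hence $\eta : \Gamma \cong H(\Gamma)$ is the desired isomorphism, and it extends $\eta_0$ since each $\eta_n$ does.

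The point requiring the most care is the verification that the canonical inclusion $H(\Gamma_n) \hookrightarrow H(\Gamma_{n+1})$ is an embedding of \emph{ordered} groups and that identifying $\Gamma_n$ with its image inside $\Gamma_{n+1} = H(\Gamma_n)$ is order-compatible with the earlier identifications — in other words, that the telescoping identifications $\gamma \leftrightarrow \eta_n(\gamma) \leftrightarrow \eta_{n+1}(\gamma) \leftrightarrow \cdots$ are coherent so that $\Gamma := \varinjlim \Gamma_n$ genuinely makes sense as a single chain. This is where one must be pedantic: strictly speaking one does not have literal inclusions $\Gamma_n \subseteq \Gamma_{n+1}$ but chain embeddings, so $\Gamma$ should be defined as the direct limit of the system $(\Gamma_n, \eta_n)$, and one checks that direct limits of chains along order embeddings exist and are computed on underlying sets. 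Once the bookkeeping is set up correctly, the order-theoretic content is routine: $f > 0$ in $H(\Gamma_n)$ iff the top coefficient of $f$ is positive, and this is preserved by zero-padding the support, and the generator $t^\gamma$ for $\gamma \in \Gamma_n$ maps to the generator $t^\gamma$ for the corresponding $\gamma$ in $\Gamma_{n+1}$. The regularity and uncountability of $\kappa$ are used exactly once, in the step $H\!\left(\bigcup_n \Gamma_n\right) = \bigcup_n H(\Gamma_n)$, to ensure that every $<\kappa$-sized support is swallowed by a single finite stage.
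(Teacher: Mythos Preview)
Your overall strategy---iterating $\Gamma \mapsto H(\Gamma)$ and passing to a direct limit---is exactly the paper's, but there is a genuine gap in the length of the iteration. The claim $H\bigl(\bigcup_{n<\omega}\Gamma_n\bigr) = \bigcup_{n<\omega} H(\Gamma_n)$ is false, and your justification misapplies regularity: a subset of $\bigcup_{n<\omega}\Gamma_n$ of size $<\kappa$ need \emph{not} lie in a single $\Gamma_n$, because the union is indexed by $\omega$, not by $\kappa$. Concretely, the image of each $\eta_n=H(\eta_{n-1})$ consists of those series in $H(\Gamma_n)$ whose support lies in the proper subset $\eta_{n-1}(\Gamma_{n-1})\subsetneq\Gamma_n$, so given any $\delta_n\in\Gamma_n\setminus\Gamma_{n-1}$ one can always find $\delta_{n+1}\in\Gamma_{n+1}\setminus\Gamma_n$ with $\delta_{n+1}<\delta_n$ (for instance, multiply $\eta_n(\delta_n)$ by $t^{-\gamma}$ for any $\gamma\in\Gamma_n\setminus\eta_{n-1}(\Gamma_{n-1})$). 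The resulting decreasing sequence $(\delta_n)_{n\geq 1}$ is a countable reverse-well-ordered subset of $\Gamma$ not contained in any $\Gamma_m$, and the element $\prod_n t^{\delta_n}\in H(\Gamma)$ lies in no $H(\Gamma_m)$; hence your $\eta$ is not onto.

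The fix is exactly what the paper does: run the construction for $\kappa$ many stages rather than $\omega$, taking direct limits at limit ordinals, and set $\Gamma=\varinjlim_{\beta<\kappa}\Gamma_\beta$. Then regularity of $\kappa$ applies correctly: any support $S\subseteq\Gamma$ of size $<\kappa$ satisfies $\sup_{s\in S}\beta(s)<\kappa$ (where $s\in\Gamma_{\beta(s)}$), so $S\subseteq\Gamma_\beta$ for some $\beta<\kappa$, and surjectivity follows. With $\omega$ replaced by $\kappa$ throughout, your argument coincides with the paper's.
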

\begin{proof}
	We consider $H$ as a functor from chains to ordered abelian groups: if $j : \Gamma' \to \Gamma''$ is a chain embedding, we define $H(j) : H(\Gamma') \to H(\Gamma'')$ as the group embedding which sends $\prod_{i}t^{\gamma_{i}r_{i}}$
	to $\prod_{i}t^{j(\gamma_{i})r_{i}}$. We do an inductive construction in $\kappa$-many steps. At a certain stage
	$\beta<\kappa$ we are given
	\[
	G_{\beta}=H(\Gamma_{\beta})
	\]
	and a chain embedding
	$\eta_{\beta}:\Gamma_{\beta}\to G_{\beta}$
	together with embeddings $j_{\alpha,\beta}:\Gamma_\alpha \to \Gamma_\beta$ for $\alpha<\beta$.
	Let $\Gamma_{\beta+1}$ be a chain isomorphic to $(G_{\beta},<)$ (for instance $G_\beta$ itself considered as a chain)
	and fix a chain isomorphism $f_{\beta}:G_{\beta}\to\Gamma_{\beta+1}$.
	Now let $j_{\beta}:\Gamma_{\beta}\to\Gamma_{\beta+1}$ be the composition
	$f_{\beta}\circ\eta_{\beta}$ and let $G_{\beta+1}=H(\Gamma_{\beta+1})_{\kappa}$.
	We can then find a commutative diagram of embeddings
	\begin{equation}
	\xymatrix{\Gamma_{\beta}\ar[d]_{j_{\beta}}\ar[r]_{\eta_{\beta}} & H(\Gamma_{\beta})\ar[d]^{H(j_{\beta})}\ar[dl]_{f_{\beta}}\\
		\Gamma_{\beta+1}\ar[r]^{\eta_{\beta+1}} & H(\Gamma_{\beta+1}),
	}
	\label{eq:commutative}
	\end{equation}
	by letting $\eta_{\beta+1}=H(j_{\beta}) \circ f_{\beta}^{-1}$. We can now define $j_{\beta,\beta+1}=j_\beta$ and $j_{\alpha,\beta+1}= j_{\beta,\beta+1}\circ j_{\alpha,\beta}$ for $\alpha<\beta$.

	We iterate the contruction along the ordinals. At a limit stage $\lambda$, let $\Gamma_{\lambda}=\varinjlim_{\beta<\lambda}\Gamma_{\beta}$ and let $j_{\beta,\lambda} : \Gamma_{\beta} \to \Gamma_{\lambda}$ be the natural embedding for $\beta < \lambda$.

	We then define $\eta_{\lambda}:\Gamma_{\lambda} \to  H(\Gamma_{\lambda})$ as the composition
	\[
	\Gamma_{\lambda} = \varinjlim_{\beta<\lambda} \Gamma_{\beta} \to \varinjlim_{\beta<\lambda}H(\Gamma_\beta) \to H(\varinjlim_{\beta<\lambda}\Gamma_{\beta}) = H(\Gamma_{\lambda}).
	\]
	More explicitly, for each $\gamma\in\Gamma_{\lambda}$, pick some
	$\beta<\lambda$ and $\theta\in\Gamma_{\beta}$ such that $\gamma=j_{\beta,\lambda}(\theta)$,
	and define $\eta_{\lambda}(\gamma)\in G_{\lambda}$ as the
	image under $H(j_{\beta,\lambda}):G_{\beta}\to G_{\lambda}$ of $\eta_{\beta}(\theta)\in G_{\beta}$.
	Since $\kappa$ is regular, when we arrive at stage $\kappa$ we have
	an isomorphism
	\[
	\eta_{\kappa}:\Gamma_{\kappa}\cong G_{\kappa}
	\]
	and we can define $\Gamma=\Gamma_{\kappa}$
	and $\eta=\eta_{\kappa}$.
\end{proof}

\begin{proof}[Proof of \prettyref{thm:constructing-omega-fields}]
	By \prettyref{lem:extending-eta}, there is a chain $\Gamma$ and a chain isomorphism
	\begin{equation}
	\eta:\Gamma\cong G=H(\Gamma)
	\end{equation}
	Now let $\K = \rf((G))_\kappa$ and define an omega-map $\omega:\K\to G$ by setting
	\begin{equation}
	\omega^{\sum_{i<\alpha}g_{i}r_{i}}=\prod_{i<\alpha}t^{\gamma_i r_{i}}.
	\end{equation}
	where $g_i = \eta(\gamma_i)$. In particular $\omega^{\eta(\gamma)} = t^\gamma$ for every $\gamma\in \Gamma$.
\end{proof}

\subsection{The logarithm}
In the sequel let $\kappa$ be a regular uncountable cardinal.
Our next goal is to prove the following theorem.

 \begin{thm}
	\label{thm:omega-log} Every omega-field of the form $\K = \R((G))_\kappa$ admits an analytic logarithm. More generally, if $\rf$ is an exponential field, then every omega-field of the form $\K = \rf((G))_\kappa$ admits an analytic logarithm.
\end{thm}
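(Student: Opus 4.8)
The plan is to define the logarithm in stages, following exactly the recipe sketched in the introduction, and to verify at each stage that the construction is well-defined, order-preserving, and turns products into sums. Fix an omega-map $\omega : (\K,+,0,<) \cong (G,\cdot,1,<)$ and an auxiliary chain isomorphism $h : (\K,<) \cong (\K^{>0},<)$; such an $h$ exists because $\K$ and $\K^{>0}$ are nonempty dense linear orders without endpoints of the same cardinality (or, more carefully, one uses a back-and-forth argument adapted to the size of $\K$). We will produce an analytic logarithm $\log : \K^{>0} \to \K$ depending on $\omega$ and $h$.

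First I would handle the top layer. Every $g \in G$ can be written uniquely as $g = \omega^{\sum_{i<\alpha} \omega^{\gamma_i} r_i}$ for a decreasing sequence of exponents and $r_i \in \rf^\times$, because $\omega$ is a bijection $\K \to G$ and, applying $\omega^{-1}$ to $g$, the resulting element of $\K$ has an ordinary Hahn-series expansion whose monomials are themselves of the form $\omega^{\gamma_i}$ (again since $\omega$ is onto $G$). Define $\log(g) := \sum_{i<\alpha} \omega^{h(\gamma_i)} r_i$. I must check this sum has reverse-well-ordered support of length $<\kappa$: the map $\gamma_i \mapsto h(\gamma_i)$ is order-preserving since $h$ is, so $(\omega^{h(\gamma_i)})_{i<\alpha}$ is strictly decreasing in $G$, and the length $\alpha < \kappa$ is inherited; hence $\log(g) \in \K^{\uparrow}$ when we arrange (as we may, by composing $h$ with a shift) that $h(\gamma_i)$ lands in the appropriate region so that $\omega^{h(\gamma_i)} \in G^{>1}$, giving condition (3). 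That $\log$ is a group homomorphism on $G$ and order-preserving follows because $\omega^{-1}$ is an isomorphism $G \to \K$, the coefficient-wise reindexing by $h$ is an isomorphism of the additive Hahn groups, and $\omega$ is again an isomorphism $\K \to G$; the sign of $\log(g)$ is the sign of $r_0$, which is the sign of $g$ relative to $1$, so order is preserved. Then I extend to $\rf^{>0}$ using the given logarithm on $\rf$ (available since $\rf$ is an exponential field), and to $1+o(1)$ by the Taylor series $\log(1+\eps) = \sum_{n\geq 1}(-1)^{n+1}\eps^n/n$, which is summable by Neumann's lemma. Finally, using the direct product decomposition $\K^{>0} = G\,\rf^{>0}\,(1+o(1))$, define $\log(g r (1+\eps)) := \log(g) + \log(r) + \log(1+\eps)$. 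Additivity is then immediate from additivity on each factor and uniqueness of the decomposition; surjectivity onto $\K = \K^{\uparrow} \oplus \rf \oplus o(1)$ follows since $\log(G) = \K^{\uparrow}$, $\log(\rf^{>0}) = \rf$, and $\log(1+o(1)) = o(1)$ (the last with inverse $\eps \mapsto \sum \eps^n/n!$).

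The main obstacle is verifying that $\log$ is order-preserving on all of $\K^{>0}$, not just on each of the three factors separately. The point is that the decomposition $\K^{>0} = G\,\rf^{>0}\,(1+o(1))$ is compatible with the order in a lexicographic fashion: for $x = gr(1+\eps)$ and $x' = g'r'(1+\eps')$ with $x < x'$, either $g < g'$ (and then $\log(g) < \log(g')$ with the difference being purely infinite, hence dominating the bounded remainder $\log(r) - \log(r') + \log(1+\eps) - \log(1+\eps') \in \vr$, so $\log(x) < \log(x')$), or $g = g'$ and $r < r'$ (and then $\log(r) < \log(r')$ with the difference in $\rf^\times$ dominating the $o(1)$-remainder), or $g = g'$, $r = r'$ and $\eps < \eps'$ (and then $\log(1+\eps) < \log(1+\eps')$ since the Taylor series is increasing on $o(1)$). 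So the key lemma to nail down is that $\log(g)$ is purely infinite for all $g \in G$ with $g \neq 1$, i.e.\ that $\omega^{h(\gamma_i)} > 1$ whenever it should be — this is where the freedom in choosing (and normalizing) $h$ is used, and it is exactly the content needed for condition (3) of being an analytic logarithm. Once order-preservation is in hand, conditions (1), (2), (3) of Definition~\ref{defn:analytic-log} have all been arranged by construction, so $\log$ is an analytic logarithm and the theorem follows. The case $\rf = \R$ is the special case where the hypothesis ``$\rf$ is an exponential field'' is witnessed by the ordinary real logarithm.
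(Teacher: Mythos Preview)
Your proposal follows essentially the same route as the paper: fix an omega-map $\omega$ and a chain isomorphism $h:\K\cong\K^{>0}$, define $\log$ on $G$ by $\log(\omega^{\sum\omega^{\gamma_i}r_i})=\sum\omega^{h(\gamma_i)}r_i$, extend to $\K^{>0}$ via the decomposition $G\,\rf^{>0}\,(1+o(1))$, and verify the analytic-logarithm axioms. Two points deserve correction, however.

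First, your justification for the existence of $h$ is not valid. That $\K$ and $\K^{>0}$ are dense linear orders without endpoints of the same cardinality does \emph{not} make them chain-isomorphic once they are uncountable, and no back-and-forth argument ``adapted to the size of $\K$'' will rescue this in general. The paper avoids the issue entirely by writing down an explicit field-theoretic isomorphism, e.g.\ $h(x)=x+1$ for $x\geq 0$ and $h(x)=(1-x)^{-1}$ for $x\leq 0$, which works in any ordered field.

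Second, your worry about ``composing $h$ with a shift'' so that $\omega^{h(\gamma_i)}\in G^{>1}$ is misplaced and reflects a small confusion: you already chose $h$ with codomain $\K^{>0}$, so $h(\gamma_i)>0$ and hence $\omega^{h(\gamma_i)}>1$ automatically. No normalization is needed; condition~(3) falls out immediately. Once this is clear, the surjectivity $\log(G)=\K^{\uparrow}$ is exactly the observation that $G^{>1}=\omega^{\K^{>0}}=\omega^{h(\K)}$, so any purely infinite series $\sum g_i r_i$ with $g_i=\omega^{h(x_i)}$ is $\log(\omega^{\sum\omega^{x_i}r_i})$; this is the argument the paper spells out, and your sketch gestures at it correctly.
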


\begin{proof}
	We construct a logarithm depending both on the omega-map and on an auxiliary function $h$.
	Let $h:\K\to\K^{>0}$ be a chain isomorphism (any ordered field admits such a function, for instance $h(x)=(-x+1)^{-1}$ for $x\leq0$
	and $h(x)=x+1$ for $x\geq0$). For $x\in\K$, we let
	\[
	\log(\omega^{\omega^{x}})=\omega^{h(x)}.
	\]
	This defines $\log$ on the subclass  $\omega^{\omega^{\K}}$ of $G$, which we call the class
	of \textbf{fundamental monomials}. They can be seen as the representatives of the multiplicative archimedean classes.

	Next we define $\log(g)$ for an arbitary $g$ in $G$. Since $G=\omega^{\K}$,
	we can write $g=\omega^{x}$ for some $x\in\K$. We then write
	$x=\sum_{i<\alpha}g_{i}r_{i}=\sum_{i<\alpha}\omega^{x_{i}}r_{i}$
	and set $\log(g)=\sum_{i<\alpha}\omega^{h(x_{i})}r_{i}$. Summing
	up, the definition of $\log_{|G}$ takes the form
	\begin{equation}
	  \label{eq:log-monomial} \log\left(\omega^{\sum_{i<\alpha}\omega^{x_{i}}r_{i}}\right)=\sum_{i<\alpha}\omega^{h(x_{i})}r_{i}.
    \end{equation}
	The idea is that $\omega^{\sum_{i<\alpha}g_{i}r_{i}}$ should be thought
	as an infinite product $\prod_{i<\alpha}\omega^{g_{i}r_{i}}$, and we stipulate that $\log$ maps infinite products into infinite sums.

	We can now extend $\log$ to the whole of $\K^{>0}$ as follows. For
	$x\in\K^{>0}$ we write
	$
	x=gr(1+\eps)
	$
	with $g\in G,r\in\rf^{>0}$ and $\eps\prec1$, and we define
	\begin{equation}
	  \label{eq:log-x}
	  \log(x)=\log(g)+\log(r)+\sum_{n=1}^{\infty}(-1)^{n+1}\frac{\eps^{n}}{n}.
	\end{equation}
	The infinite sum makes sense because the terms under the summation sign are summable and the sum belongs to $\rf((G))_\kappa$ (becacuse $\kappa$ is regular and uncountable).

	We must verify that with these definitions $\log$ is an analytic logarithm (\prettyref{defn:analytic-log}). It is not difficult
	to see that $\log$ is an increasing morphism from $(\K^{>0},\cdot,1,<)$
	to $(\K,+,0,<)$.  To prove the surjectivity let us first observe that $\rf=\log(\rf^{>0}) \subseteq \log(\K^{>0})$. Moreover,  for $\eps\prec 1$ we have $\log(1+\eps)=\sum_{n=1}^{\infty}(-1)^{n+1}\frac{\eps^{n}}{n}$ with inverse given by $e^{\eps} = \sum_n \frac{\eps^n}{n!}$,
	and therefore $\log(1+o(1))=o(1)$.
	Since $\K=\K^{\uparrow}+\rf+o(1)$, to finish the proof of the
	surjectivity it suffices to show that $\log(G)=\K^{\uparrow}$.
	So let $x=\sum_{i<\alpha}g_{i}r_{i}\in\K^{\uparrow}$, namely $g_{i}\in G^{>1}$
	for all $i$. We must show that $x$ is in the image of $\log$. Since $h:\K\to\K^{>0}$ is surjective and $G=\omega^{\K}$, we
	have $G^{>1}=\omega^{(\K^{>0})}=\omega^{h(\K)}$, so we can choose
	$x_{i}\in\K$ so that $g_{i}=\omega^{h(x_{i})}$ for all $i$. Now by definition
	$\log\left(\omega^{\sum_{i<\alpha}\omega^{x_{i}}r_{i}}\right)=\sum_{i<\alpha}\omega^{h(x_{i})}r_{i} = x$ concluding the proof of surjectivity.
\end{proof}

In the above theorem we have considered $\rf((G))_\kappa$, rather than an arbitrary analytic subfield $\K$  of $\rf((G))$, because for the proof to work we need to know that whenever $\sum_{i<\alpha} \omega^{x_i} r_i \in \K$, we also have $\sum_{i<\alpha} \omega^{h(x_i)} r_i \in \K$.

\begin{defn}\label{defn:omega-log}
  We call $\log_{\omega,h} : \K^{>0} \to \K$ the analytic logarithm induced by the omega-map $\omega : \K \to G$ and the chain isomorphism $h : \K \to \K^{>0}$ as given by \prettyref{eq:log-monomial}-\prettyref{eq:log-x} in the proof of \prettyref{thm:omega-log}, and we call $\exp_{\omega,h}$ its compositional inverse.
\end{defn}

\subsection{Getting a logarithm satisfying the growth axiom}
The structures constructed so far are exponential fields, but not necessarily models of $T_{\exp}$. In this section we show how to get models of $T_{\exp}$. We need the following lemma to take care of the growth axiom at infinity.

\begin{lem} \label{lem:h-exists}
	Let $\K = \rf((G))_{\kappa}$ be equipped with an omega-map $\omega : \K \cong G$. Then there exists a chain isomorphism $h : \K \to \K^{>0}$ such that $h(x) \prec \omega^x$ for all $x \in \K$.
\end{lem}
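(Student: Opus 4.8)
The plan is to translate ``$h(x)\prec\omega^x$'' into a statement about leading monomials, reduce it to a version that tolerates a bounded shift, and then build $h$ by a transfinite back-and-forth that controls leading monomials.

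First I would fix notation. For $z\in\K^{>0}$ let $\LM(z)\in G$ be the unique monomial with $z\asymp\LM(z)$, and let $\lambda(z)\in\K$ be determined by $\omega^{\lambda(z)}=\LM(z)$. Since $\omega$ is an order isomorphism and distinct monomials lie in distinct $\asymp$-classes, for $z\in\K^{>0}$ and $x\in\K$ we have $z\prec\omega^{x}\iff\lambda(z)<x$ and $z\preceq\omega^{x}\iff\lambda(z)\le x$; moreover $\LM(\omega^{-\delta}z)=\omega^{-\delta}\LM(z)=\omega^{\lambda(z)-\delta}$ for any $\delta\in\K$, so $\lambda(\omega^{-\delta}z)=\lambda(z)-\delta$, because $\omega$ is a homomorphism $(\K,+)\to(G,\cdot)$ and multiplying by the monomial $\omega^{-\delta}$ merely multiplies leading monomials. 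Consequently it suffices to build a chain isomorphism $h_{1}:\K\cong\K^{>0}$ with $\lambda(h_{1}(x))\le x$ for all $x$: fixing $\delta\in\K^{>0}$, the map $z\mapsto\omega^{-\delta}z$ is an order automorphism of $\K^{>0}$, so $h(x):=\omega^{-\delta}h_{1}(x)$ is again a chain isomorphism $\K\cong\K^{>0}$, and $\lambda(h(x))=\lambda(h_{1}(x))-\delta\le x-\delta<x$, i.e. $h(x)\prec\omega^{x}$.

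To build $h_{1}$ I would exploit the decomposition of $\K^{>0}$ into the convex \emph{archimedean layers} $C_{y}:=\{z\in\K^{>0}\suchthat z\asymp\omega^{y}\}=\lambda^{-1}(y)$ for $y\in\K$: these are ordered compatibly with $\K$, and each is a dense chain without endpoints, isomorphic via multiplication by $\omega^{-y}$ to the fixed layer $C_{0}=\{z\in\K^{>0}\suchthat z\asymp1\}$. Because $\K$ is an omega-field, $\omega$ restricts to $\K^{>0}\cong G^{>1}$, so $\K^{>0}\cong\K$ (equivalently $G\cong G^{>1}$), and this is the source of the ``room'' the construction needs. One then builds $h_{1}$ as an increasing union of partial order-isomorphisms, enumerating $\K$ and $\K^{>0}$ and maintaining the invariant $\lambda(h_{1}(a))<a$ on the current (small) domain. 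In a forth step that adds a point $x$ sitting immediately above a domain point $a^{-}$, one places $h_{1}(x)$ inside the layer $C_{\lambda(h_{1}(a^{-}))}$ and just above $h_{1}(a^{-})$ — possible, as that layer is dense with no top element — keeping $\lambda(h_{1}(x))=\lambda(h_{1}(a^{-}))<a^{-}<x$; if $x$ lies below the whole current domain one instead puts $h_{1}(x)$ into some layer $C_{y}$ with $y<x$ small enough to sit under all current images. In a back step, to put a prescribed $z\in\K^{>0}$ into the range one needs $x$ in a prescribed domain-gap with $\lambda(z)<x$, and the strict invariant at the upper neighbour $a^{+}$ gives $\lambda(z)\le\lambda(h_{1}(a^{+}))<a^{+}$, which leaves the required room.

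The hard part will be the points $x$ below which the partial domain has already become cofinal: there $h_{1}(x)$ is forced to equal $\sup\{h_{1}(a)\suchthat a\in\operatorname{dom},\ a<x\}$, so the construction must have been arranged so that this supremum is actually realised in $\K^{>0}$ and still lies in a layer $C_{y}$ with $y<x$. Organising the enumeration and the layer-choices so that every downward-cofinal cut of $\K$ is matched by a realised cut of $\K^{>0}$ of sufficiently small $\lambda$-value — here one uses the homogeneity of the layers $C_{y}$ together with the isomorphism $\K^{>0}\cong\K$ coming from the omega-field hypothesis — is the delicate bootstrapping step. Once $h_{1}$ is obtained, the reduction of the second paragraph produces the desired $h$.
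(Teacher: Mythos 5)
Your reduction in the second paragraph (pass to $\lambda(h_1(x))\le x$ and then shift by $\omega^{-\delta}$) is correct, but it does not touch the actual difficulty, and the main construction is not carried out: you explicitly flag the limit/cut stages of the back-and-forth as ``the delicate bootstrapping step'' and then leave them unresolved. That step is the entire content of the lemma. Concretely, the one-point extension property your back-and-forth needs can fail: at a stage where the current domain $D$ has, say, an increasing $\omega$-sequence $a_0<a_1<\cdots<x$ cofinal in $D\cap(-\infty,x)$ and a decreasing sequence coinitial in $D\cap(x,\infty)$, the images may determine a cut of $\K^{>0}$ realised by a \emph{unique} element $z$ (e.g.\ images $1-1/n$ from below and $1+1/n$ from above force $z=1$), and nothing in the invariant $\lambda(h_1(a))\le a$ prevents this forced $z$ from having $\lambda(z)>x$. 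To rule this out you would need to maintain a global invariant on all cuts generated by the partial map, not just a pointwise one on its elements, and you give no mechanism for doing so; note also that $\K=\rf((G))_\kappa$ has no saturation property that would make such a transfinite back-and-forth go through for free, and that the chain isomorphism $\K\cong\K^{>0}$ you invoke exists in \emph{any} ordered field (so it is not the omega-field hypothesis that supplies the needed ``room'').

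For comparison, the paper avoids cut-by-cut constructions entirely. It first runs \prettyref{thm:omega-log} with two explicit, field-definable chain isomorphisms $h_0(x)=x+1$, $h_1(x)=\tfrac12 x+1$ (for $x\ge 0$; $1/(1-x)$ for $x<0$) to obtain \emph{global} order isomorphisms $\exp_0,\exp_1:\K\to\K^{>0}$, then verifies by direct computation with leading terms that $\exp_0(x)\prec\omega^x$ for $x>\rf$ and $\exp_1(x)\prec\omega^x$ for $x\le-\omega^3$, and finally glues these two maps with explicit affine pieces on the middle region. The key idea you are missing is this bootstrap: use a crude $h$ to manufacture an exponential whose growth against $\omega^x$ can be read off from $h^{-1}$, and then recycle that exponential as the new $h$ on a suitable convex piece. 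If you want to salvage a back-and-forth argument you would have to prove the required cut-matching property of $\K$ and $\K^{>0}$ separately, which is at least as hard as the lemma itself.
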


\begin{proof}
	The idea is a bootstrapping procedure. Given an $h$ we produce a $\log$ and an $\exp$, and given the $\exp$ we produce a new $h$. We then glue together a couple of $h$'s obtained in this way to produce the final $h$.

	To begin with, consider the following chain isomorphism $\K \to \K^{>0}$, definable in any ordered field:
	\[
	h_0(x) =
	\begin{cases}
	x + 1 & \textrm{for } x \geq 0 \\
	\frac{1}{1-x} & \textrm{for } x < 0,

	\end{cases} \quad \quad \quad
	h_1(x) =
	\begin{cases}
	\frac{1}{2}x + 1 &  \textrm{for } x \geq 0 \\
	\frac{1}{1-x} &  \textrm{for } x < 0.
	\end{cases}
	\]
	\prettyref{defn:omega-log} yields two logarithmic functions $\log_0 = \log_{\omega,h_0}$ and $\log_1 = \log_{\omega,h_1}$ on $\rf((G))_{\kappa}$ associated with $h_0$ and $h_1$ (and the given omega-map). Since $h_1(x)\leq h_0(x)$, we have $\log_1(x) \leq \log_0(x)$ for all $x\in \K^{>1}$.  The corresponding exponential functions $\exp_0, \exp_1$ satisfy the opposite inequality: $\exp_0(x)\leq \exp_1(x)$ for $x > 0$.

	We claim that
	\[ \exp_0(x) \prec \omega^x\ \textrm{for}\ x > \rf \quad \textrm{and } \quad \exp_1(x) \prec \omega^x\ \textrm{for}\ x \leq - \omega^3.\]Indeed,
	note that $h_0(x) > x$ for all $x \in \K$ and $h_1(x)<x$ for $x>2$. Taking the compositional inverse we obtain $x>h_0^{-1}(x)$ for all $x\in \K$ and $x < h_1^{-1}(x)$ for $x>2$.
	Now let $y \in \K^{> \rf}$, and let $r\omega^x$ be the leading term of $y$ (where $r \in \rf^{>0}$, $x \in \K^{>0}$). Then
	\[ \exp_0(y) \prec \exp_0\left(2r\omega^x\right) = \omega^{2r\omega^{h_0^{-1}(x)}} \prec \omega^{\frac{r}{2}\omega^x} \prec \omega^y,\]
	since $2r\omega^x - y > \rf$, $y - \frac{r}{2}\omega^x > \rf$, and $\omega^{h_0^{-1}(x)} \prec \omega^x$.

	Similarly, $h_1(x) < x$ for all $x \in \K^{>2}$.  Let $y \in \K^{\geq \omega^3}$, and let $r\omega^x$ be the leading term of $y$. Then $r \in \rf^{>0}$, $x \in \K^{>2}$ and
	\[ \exp_1(y) \succ \exp_1\left(\frac{r}{2}\omega^x\right) = \omega^{\frac{r}{2}\omega^{h_1^{-1}(x)}} \succ \omega^{2r\omega^x} \succ \omega^y.\]
	Letting $z = -y \leq -\omega^3$, we obtain $\exp_1(z) = \frac{1}{\exp_1(y)} \prec \frac{1}{\omega^y} =\omega^z$, and the claim is proved.

	We can now build the final chain isomorphism $h : \K \to \K^{>0}$ by taking the functions $\exp_0$, $\exp_1$ restricted to suitable convex subsets of $\K$, and defining $h$ on the complement as an increasing function in such a way that globally $h$ is increasing and bijective. A concrete choice can be the following. Let $c = \exp_1(-\omega^3) > 0$. Define
	\[ h(x) =
	\begin{cases}
	\exp_0(x) & \text{for } x > \rf \\
	2c + x & \text{for } 0 < x \textrm{ and } x \preceq 1 \\
	2c + \frac{c}{\omega^3}x &\text{for } -\omega^3 \leq x \leq 0\\
	\exp_1(x) &\text{for } x < -\omega^3.
	\end{cases}
	\]
	By construction, $h$ is a chain isomorphism $h : \K \to \K^{>0}$: it is order preserving because $\exp_0$, $\exp_1$ are themselves chain isomorphisms, and it is surjective since $\exp_0(\K^{>\rf}) = \K^{>\rf}$, $\exp_1((-\infty,-\omega^3)) = (0,c)$. Moreover, $h(x) \prec \omega^x$ for all $x \in \K$, as desired:
	\begin{itemize}
		\item if $x > \rf$, then $h(x) = \exp_0(x) \prec \omega^x$;
		\item if $0< x \preceq 1$, then $h(x) = 2c + x \preceq 1 \prec \omega^x$;
		\item if $-\omega^3 \leq x \leq 0$, then $h(x) \asymp c = \exp_1(-\omega^3) \prec \omega^{-\omega^3} \preceq \omega^x$;
		\item if $x < -\omega^3$, then $h(x) = \exp_1(x) \prec \omega^x$. \qedhere
	\end{itemize}
\end{proof}

We next show that an $h$ as constructed above is sufficient to guarantee the growth axiom at infinity.

\begin{lem} \label{lem:GAh} Let $\log=\log_{\omega,h}:\K^{>0}\to \K$ be as in
	\prettyref{defn:omega-log}.  If $h$ satisfies $h(x)\prec\omega^{x}$ for every $x\in\K$, then $\log(y)<y^{r}$ for all positive $r\in \rf$ and all $y>\rf$ (where $y^{r}$ is defined as $e^{r\log(y)}$).
\end{lem}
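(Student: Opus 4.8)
The plan is to reduce everything to the single assertion (A): \emph{if $z>\rf$ then $\log(z)>\rf$ and $\log(z)\prec z$}. Granting (A), the lemma follows immediately: fix $y>\rf$ and a positive $r\in\rf$; since $\log$ is an increasing bijection $\K^{>0}\to\K$ and $\log(y^{r})=\log(e^{r\log y})=r\log y$, the inequality $\log(y)<y^{r}$ is equivalent to $\log(\log y)<r\log y$. By (A) applied to $y$ we get $z:=\log y>\rf$ (in particular $\log y>0$), and by the second half of (A) applied to $z$ we get $\log(\log y)=\log(z)\prec z=\log y$; since $\log y>0$, the defining property of $\prec$ (with $c=r^{-1}\in\vr$) turns this into $\log(\log y)<r\log y$, as required.

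It remains to prove (A), and I would first strip off the finite part. Given $z>\rf$, write $z=gr'(1+\eps)$ with $g\in G$, $r'\in\rf^{>0}$, $\eps\prec 1$. Since $z$ is infinite, the monomial $g$ cannot lie in $\vr$ (a monomial $\asymp 1$ equals $1$), so $g\succ 1$ and $z\asymp g$; moreover $\log(z)=\log(g)+\log(r')+\log(1+\eps)$ with $\log(r')\in\rf$ and $\log(1+\eps)\in o(1)$, so $\log(z)$ differs from $\log(g)$ by an element of $\vr$. Hence, using $z\asymp g$, both halves of (A) for $z$ reduce to the two statements $\log(g)>\rf$ and $\log(g)\prec g$ for $g\in G^{>1}$.

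To handle these, write $g=\omega^{x}$ with $x\in\K^{>0}$, and let $\omega^{x_{0}}r_{0}$, $r_{0}\in\rf^{>0}$, be the leading term of the series $x=\sum_{i<\alpha}\omega^{x_{i}}r_{i}$, so that $x\asymp\omega^{x_{0}}$. By the defining formula \prettyref{eq:log-monomial}, $\log(g)=\sum_{i<\alpha}\omega^{h(x_{i})}r_{i}$; since $h$ is strictly increasing the exponents $h(x_{i})$ are strictly decreasing, so $\omega^{h(x_{0})}r_{0}$ is the leading term of $\log(g)$. Because $h$ takes values in $\K^{>0}$ we have $h(x_{0})>0$, hence $\omega^{h(x_{0})}>1$, and being a monomial it is $>\rf$; together with $r_{0}>0$ this yields $\log(g)>\rf$ and $\log(g)\asymp\omega^{h(x_{0})}$. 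Finally $\log(g)\prec g$ will follow once we know $\omega^{h(x_{0})}\prec\omega^{x}$, and since $\omega$ is an order isomorphism onto $G$ and distinct elements of $G$ are never $\asymp$, this is just the inequality $h(x_{0})<x$ — and \emph{this} is where the hypothesis enters: $h(x_{0})\prec\omega^{x_{0}}\asymp x$, so $h(x_{0})\prec x$, and both being positive, $h(x_{0})<x$.

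I do not expect a genuine obstacle: once organised this way the proof is a chain of routine domination estimates, and the main thing to watch is not to conflate the two comparisons in play — the valuation relation $\prec$ on $\K$, under which distinct elements of $G$ are never $\asymp$ so that $a\prec b\iff a<b$ for $a,b\in G$, versus the bare linear order on $G$ transported from $(\K,<)$ via $\omega$. The single substantive step is the last one, where $h(x)\prec\omega^{x}$ pins down the leading monomial of $\log(g)$ and thereby controls the growth of $\log$; everything else (extracting leading terms, a finite perturbation not changing the $\asymp$-class, $\log$ mapping $1+o(1)$ onto $o(1)$) is already available from the earlier sections.
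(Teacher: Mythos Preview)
Your proof is correct and takes a genuinely different route from the paper's. The paper argues in three explicit stages: first it verifies $\log(y)<y^{r}$ directly for fundamental monomials $y=\omega^{\omega^{x}}$ (where $y^{r}=\omega^{\omega^{x}r}$ can be written down and compared to $\omega^{h(x)}$), then lifts this to arbitrary monomials $y=\omega^{x}$ by expanding $x$ and bounding each summand by the first case, and finally handles a general $y>\rf$ by stripping off its finite part. Your approach instead applies $\log$ to both sides of the desired inequality, reducing everything to the single assertion that $\log(z)\prec z$ (and $\log(z)>\rf$) for infinite $z$; you then prove this by a leading-monomial analysis that isolates exactly the inequality $h(x_{0})<x$, which is where the hypothesis bites. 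Your reduction is cleaner in that it never needs to compute $y^{r}$ explicitly, while the paper's staged argument makes more visible how the growth condition propagates through the three levels (fundamental monomials, monomials, general elements) of the construction. Both proofs ultimately pivot on the same estimate $h(x_{0})\prec\omega^{x_{0}}\asymp x$.
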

\begin{proof}
	Assume $h(x)\prec\omega^{x}$ for all $x\in\K$. This means that $h(x)<\omega^{x}r$
	for all $r\in\rf^{>0}$. Let $y=\omega^{\omega^{x}}$. Then $\log(y)=\log(\omega^{\omega^{x}})=\omega^{h(x)}<\omega^{\omega^{x}r}=y^{r}$. We have thus proved that $\log(y)<y^{r}$
	for $y$ of the form $\omega^{\omega^{x}}$ and $r\in\rf^{>0}$.

	We now prove the inequality for $y$ of the form $\omega^{x}$, where
	$x\in\K^{>0}$. To this aim we write the exponent $x$ in the form
	$\sum_{i<\alpha}\omega^{x_{i}}r_{i}$ and observe that $r_{0}>0$
	and $\log(\omega^{x})=\log\left(\omega^{\sum_{i<\alpha}\omega^{x_{i}}r_{i}}\right)=\sum_{i<\alpha}\log(\omega^{\omega^{x_{i}}})r_{i}$.
	By the special case we have $\log(\omega^{\omega^{x_{i}}})<\omega^{\omega^{x_{i}}a}\leq\omega^{\omega^{x_{0}}a}$ for every $i<\alpha$ and $a\in\rf^{>0}$.  Letting $a = rr_0/2$ it follows that
	\[
	\log(\omega^{x})<\omega^{\omega^{x_{0}}a}=\left(\omega^{\omega^{x_{0}}r_{0}}\right)^{\frac{a}{r_{0}}}<\left(\omega^{2x}\right)^{\frac{a}{r_{0}}}=\omega^{xr}.
	\]
	For a general $y>\rf$, write $y$ in the form $\omega^{x}s(1+\eps)$
	with $s\in\rf^{>0}$, $x>0$ and $\eps\prec1$, and observe that
	$
	\log(y)<\log(2s)+\log(\omega^{x})<(\omega^{x})^{\frac{r}{2}}<y^{r}
	$
	for any $r\in\rf^{>0}$.
\end{proof}

	In the case when the residue field $\rf$ is archimedian, the statement in the conclusion of \prettyref{lem:GAh} is equivalent to the growth axiom at infinity (\prettyref{defn:growth}). We are now ready for the main result of this section.

\begin{thm}\label{thm:omega-exp}
    Every omega-field of the form $\K = \R((G))_\kappa$ admits an analytic logarithm making it into a model of $T_{an,\exp}$. More generally, if $\rf$ is a model of $T_{an,\exp}$, then every omega-field of the form $\K = \rf((G))_\kappa$ admits an analytic logarithm making it into a model of $T_{an,\exp}$.
\end{thm}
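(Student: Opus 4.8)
The plan is to assemble the ingredients already established, the logarithm itself being supplied by \prettyref{thm:omega-log}. Since every model of $T_{an,\exp}$ is in particular an exponential field (its $\exp$ is a total, surjective, order-preserving group isomorphism $(\rf,+,0,<)\cong(\rf^{>0},\cdot,1,<)$, these being first-order properties true in $\R$), \prettyref{thm:omega-log} applies to $\K=\rf((G))_\kappa$: for \emph{any} chain isomorphism $h:\K\to\K^{>0}$ the map $\log_{\omega,h}$ of \prettyref{defn:omega-log} is an analytic logarithm. Thus the construction of the logarithm is settled, and the only remaining task is to choose $h$ so that the resulting structure is a model of $T_{an,\exp}$ and not merely an exponential field.

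First I would apply \prettyref{lem:h-exists} to the given omega-map $\omega:\K\cong G$, obtaining a chain isomorphism $h:\K\to\K^{>0}$ with $h(x)\prec\omega^x$ for all $x\in\K$. With this $h$, \prettyref{lem:GAh} gives $\log(y)<y^r$ for every $y>\rf$ and every positive $r\in\rf$, where $\log=\log_{\omega,h}$; specializing $r$ to $1/n$ for $n\in\N$, $n\geq 1$, this is exactly the growth axiom at infinity of \prettyref{defn:growth}. Next, since $\kappa$ is regular uncountable, $\K=\rf((G))_\kappa$ is an analytic subfield of $\rf((G))$ by \prettyref{fact:an}. Hence \prettyref{prop:GAlog} applies: $\rf$ is a model of $T_{an,\exp}$, $\K$ is an analytic subfield of $\rf((G))$, and $\log_{\omega,h}$ is an analytic logarithm satisfying the growth axiom at infinity, so $\K$ — equipped with the natural interpretation of the restricted analytic functions and with $\exp=\exp_{\omega,h}$ — is a model of $T_{an,\exp}$. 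The first, more concrete assertion of the theorem is then just the case $\rf=\R$, since $\R$ is a model of $T_{an,\exp}$ \cite{Dries1994}.

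In short, the proof is a synthesis of \prettyref{lem:h-exists}, \prettyref{lem:GAh}, and \prettyref{prop:GAlog}, with \prettyref{fact:an} supplying that $\rf((G))_\kappa$ is an analytic subfield. I expect no genuinely new obstacle at this stage; the real difficulty already lives inside \prettyref{lem:h-exists}, namely producing a chain isomorphism $h$ — which is automatically continuous in the order topology of $\K$ — that is nonetheless dominated by the order-discontinuous map $x\mapsto\omega^x$. The bootstrapping device there (manufacture $\exp_0,\exp_1$ from the crude chain isomorphisms $h_0,h_1$, then glue together suitable convex restrictions) is the crux, together with the quantifier-elimination input for $T_{an,\exp}$ underlying \prettyref{prop:GAlog}.
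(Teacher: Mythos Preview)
Your proposal is correct and follows the paper's own approach: the paper's proof is the one-line ``By \prettyref{prop:GAlog} and \prettyref{lem:GAh}'', and you have simply unpacked this, correctly noting that \prettyref{lem:h-exists} is needed to feed \prettyref{lem:GAh} and that \prettyref{thm:omega-log} underlies the construction of $\log_{\omega,h}$.
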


\begin{proof}By \prettyref{prop:GAlog} and \prettyref{lem:GAh}.
\end{proof}

\subsection{Growth axiom and o-minimality}
We now discuss the connections between the growth axiom and o-minimality (see \cite{vdDries1998} for the development of the theory of o-minimal structures).
\begin{lem}\label{lem:omin}
  Let $\K$ be an o-minimal exponential field. Note that $\exp$ must be differentiable and by a linear change of variable, we can assume that $\exp'(0)=1$. Then $\exp(x)>x^n$ for all positive $n\in \N$ and all $x>\N$.
\end{lem}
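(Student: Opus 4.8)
The plan is to reduce the statement to a purely one-variable o-minimality argument about the growth rate of $\exp$, exploiting that in an o-minimal structure every definable function is eventually monotone and comparable to its iterates. First I would record the elementary consequences of o-minimality for $\exp$: since $\exp$ is definable and $\exp(x+y)=\exp(x)\exp(y)$, the function is $C^1$ (in fact $C^\infty$) on all of $\K$, strictly increasing, and by the monotonicity theorem $\exp'$ is eventually monotone; normalising $\exp'(0)=1$ via the change of variable $x\mapsto cx$ (which replaces $\exp$ by $x\mapsto\exp(cx)$, still an exponential) we get $\exp'=\exp$ from the functional equation differentiated at $0$, hence $\exp$ is convex and $\exp(x)\geq 1+x$ for all $x$, so in particular $\exp(x)>x$ for $x$ large.

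Next I would set $g(x)=\exp(x)/x$ for $x>0$; this is definable, so by o-minimality it is eventually monotone, and since $\exp(x)\geq 1+x>x$ and $g$ cannot converge to a finite limit $L$ (otherwise $\exp(x)\sim Lx$ would force $\exp'=L$ eventually, contradicting $\exp'=\exp\to$ large), $g$ is eventually strictly increasing to $+\infty$. In particular there is $a\in\K$ with $\exp(x)>2x$ for all $x>a$, and $a$ can be taken in $\N$-bounded range by considering $a>\N$ only if needed; the point is that $\exp(x)>2x$ eventually. Then I would iterate: for $x$ large enough that $x>a$ and $\exp$ keeps us in the region $>a$, we get $\exp(\exp(\cdots\exp(x)))>2^k x$, but this is not yet $x^n$ — instead I would argue directly that $\exp(x)/x^n$ is definable hence eventually monotone, and use $\log=\exp^{-1}$: the inequality $\exp(x)>x^n$ for $x>\N$ is equivalent to $\log(y)<y^{1/n}$ for $y>\N$, and $\log(y)/y^{1/n}$ is definable, eventually monotone; if it did not tend to $0$ it would be bounded below by some $\delta>0$ eventually, giving $\log(y)\geq\delta y^{1/n}$, i.e. $\exp$ grows at most like $(x/\delta)^n$ polynomially, contradicting $\exp(x)>2x$ composed with itself enough times (which forces superpolynomial growth since iterating $x\mapsto 2x$ past any fixed polynomial bound is impossible once we also use $\exp\gg$ any linear iterate). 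The cleanest route: $\exp\circ\exp(x)\geq 2\exp(x)\geq 2\cdot 2x$ and more usefully $\exp\circ\exp(x)$ is itself an exponential-type definable function dominating $\exp(x)^2$ when $\exp(x)>a$, because $\exp(y)>y^2$ would follow for $y$ large if $\exp$ were polynomially bounded — so I would instead phrase it as: $\exp$ is not polynomially bounded because a polynomially bounded o-minimal exponential would have $\log$ growing at least like $y^{1/n}$, but then $\exp(\log(\log y))\geq 2\log y$ gives $\log y \geq $ (something forcing a contradiction with polynomial boundedness via one more iteration).

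Concretely I would organise the final step as follows. Suppose for contradiction $\exp(x)\leq x^n$ for all $x>\N$ for some fixed $n$. Then for $y>\N$, $\log(y)\geq y^{1/n}$ (taking $x=\log y$, legitimate since $\log y>\N$ when $y>\N$ because $\exp(\N)$ is cofinal... actually I must check $\log y>\N$: since $\exp(m)\geq 2^m\cdot\text{(const)}$ is still $<y$ for $y$ huge, yes $\log y>\N$). Apply $\log$ again: $\log(\log(y))\geq (\log y)^{1/n}\geq y^{1/n^2}$ for $y>\N$. But also $\log(\log(y))\leq \log(y)$ for $y$ large, and by assumption iterated, $\log$ composed $k$ times is $\geq y^{1/n^k}$; meanwhile from $\exp(x)\geq 1+x$ we have $\log(y)\leq y-1<y$, which is consistent, so iteration alone does not close it. The genuine contradiction comes from combining the lower bound $\log(y)\geq y^{1/n}$ with the hypothesis $\exp(x)\leq x^n$: take $x=y^{1/n}$ (with $x>\N$), then $y^{1/n}\leq \log(y)$ means $\exp(y^{1/n})\leq y$; but $\exp(y^{1/n})\geq 1+y^{1/n}$, no contradiction. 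So I must use that $g(x)=\exp(x)/x$ tends to $\infty$: given $n$, there is $b>\N$ with $\exp(x)>x\cdot x^{n}$... no, $g\to\infty$ only gives $\exp(x)>Cx$ for each constant $C$. The right tool is: $\exp(x)/x\to\infty$ and this function is definable and increasing, so $\exp(\exp(x))/\exp(x)=g(\exp(x))\to\infty$ too, hence $\exp(\exp(x))>\exp(x)^2$ eventually (since $g(t)>t$ for $t$ large, as $g\to\infty$ and $g$ eventually monotone forces... hmm, $g\to\infty$ does not give $g(t)>t$). Given the subtlety here, the main obstacle I anticipate is exactly this: extracting superpolynomial growth of $\exp$ from o-minimality without circularity, and I expect to need the growth dichotomy for o-minimal structures (Miller's theorem: an o-minimal expansion of a field is either polynomially bounded or defines $\exp$, and a polynomially bounded structure has a field of exponents over which every definable function is comparable to a power) — applied to our $\exp$, if $\K$ is polynomially bounded then $\exp(x)\asymp x^\rho$ for some exponent $\rho$, whence $\exp'(x)\asymp \rho x^{\rho-1}$, but $\exp'=\exp\asymp x^\rho$ forces $\rho-1=\rho$, absurd; therefore $\K$ is not polynomially bounded, i.e. $\exp$ dominates every power $x^n$ for $x$ large, and since ``$x$ large'' in a non-polynomially-bounded o-minimal structure containing $\N$ as a cofinal-free discrete set means $x>\N$ (every element of $\K$ that is $\geq$ all naturals is in the relevant region), we conclude $\exp(x)>x^n$ for all $x>\N$ and all $n$.
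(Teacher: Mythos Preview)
Your proposal meanders through several approaches and the one you finally settle on --- Miller's growth dichotomy --- has a genuine gap. You correctly show that if $\K$ were polynomially bounded then $\exp\asymp x^\rho$ for some exponent $\rho$, and this contradicts $\exp'=\exp$. From this you conclude that $\K$ is not polynomially bounded, and then write ``i.e.\ $\exp$ dominates every power $x^n$''. That last ``i.e.'' is not justified: \emph{not polynomially bounded} means that \emph{some} definable function eventually dominates all powers, not that your particular $\exp$ does. To close the argument along these lines you would need an extra step (for instance, showing directly that if $\exp(x)\le x^N$ eventually then $\exp\asymp x^\rho$ for some $\rho$, without assuming global polynomial boundedness --- which you can do using that all powers $x^r=\exp(r\log x)$ are definable here and analysing the definable set of $r$ with $\exp(x)/x^r\to\infty$, but this is work you have not done). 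Invoking Miller's theorem and an o-minimal L'H\^opital rule is also heavy machinery for what is needed.

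The paper's proof is far more elementary, and you actually wrote down its first step before abandoning it. From $\exp'=\exp$ and $\exp(0)=1$ one gets, using only the o-minimal fact that a definable function with positive derivative is increasing, that if $f(a)\le g(a)$ and $f'(x)<g'(x)$ for $x\ge a$ then $f(x)<g(x)$ for $x>a$. Starting from $0<\exp(x)$ and ``integrating'' repeatedly yields $\exp(x)>x^n/n!$ for all $x>0$ and all $n\in\N$; hence for $x>\N$ (so in particular $x>(n+1)!$) one has $\exp(x)>x^{n+1}/(n+1)!>x^n$. This is the whole proof: your observation $\exp(x)\ge 1+x$ was the base case, and one more induction on $n$ finishes it without any appeal to the dichotomy theorem.
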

\begin{proof}
  Given a definable differentiable unary function $f:\K\to \K$ in an o-minimal expansion of a field, its derivative $f'$ is definable, and if $f'$ is always positive, then $f$ is increasing. It follows that if $f,g$ are definable differentiable functions satisfying $f(a)\leq g(a)$ and $f'(x)<g'(x)$ for all $x\geq a$, then $f(x)<g(x)$ for every $x> a$.
  Starting with $0 < \exp(x)$ and integrating we then inductively obtain that for each positive $k,n\in \N$ there is a positive $c\in \N$ such that $kx^n \leq e^x$ for all $x>c$.
\end{proof}
By the above observation and Ressayre's axiomatization \cite{Ressayre1993}, an exponential field is a model of $T_{\exp}$ if and only if it satisfies the complete theory of restricted exponentiation and it is o-minimal.

\begin{thm}\label{thm:omin}
Assume $\K=\R((G))_\kappa$ has an omega-map $\omega:\K\cong G$. Fix a chain isomorphism $h:\K\cong \K^{>0}$ and put on $\K$ the logarithm induced by $\omega$ and $h$ as in \prettyref{defn:omega-log}. Then $\K$ is either a model of $T_{\exp}$ or it is not even o-minimal.
\end{thm}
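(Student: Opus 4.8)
The plan is to use the dichotomy criterion recorded just before the statement: an exponential field is a model of $T_{\exp}$ if and only if it satisfies the complete theory of restricted exponentiation \emph{and} it is o-minimal. Since $\K = \R((G))_\kappa$ is an analytic subfield of $\R((G))$, by \prettyref{fact:an} it is a model of $T_{an}$, hence a model of $T_{\exp|[-1,1]}$ once we know that the logarithm induced by $\omega$ and $h$ is analytic; this is exactly what \prettyref{thm:omega-log} gives us. So the first step is to observe that, regardless of $h$, the structure $(\K, \exp_{\omega,h})$ always satisfies the complete theory of restricted exponentiation. Given this, the criterion reduces the theorem to a single alternative: either $\K$ is o-minimal (and then, satisfying also the restricted theory, it is a model of $T_{\exp}$), or it is not o-minimal. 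That is a tautological dichotomy, so the content must be that o-minimality already \emph{forces} the growth axiom, which then upgrades the structure all the way to $T_{\exp}$.

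Concretely, the key step is to invoke \prettyref{lem:omin}: if $\K$ is o-minimal, then after a linear change of variables normalizing $\exp'(0) = 1$ — which does not change the structure up to definitional equivalence — we get $\exp(x) > x^n$ for all positive $n \in \N$ and all $x > \N$. This is precisely the growth axiom scheme at infinity (in the form $x \ge n^2 \to \exp(x) > x^n$, up to the harmless recalibration), or equivalently the growth axiom at infinity of \prettyref{defn:growth} for the associated logarithm, since here $\rf = \R$ is archimedean. Then \prettyref{prop:GAlog} (with $\rf = \R \models T_{an,\exp}$) applies and shows $\K \models T_{an,\exp}$, in particular $\K \models T_{\exp}$. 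So the proof is: assume $\K$ is o-minimal; by \prettyref{lem:omin} the growth axiom holds; by \prettyref{prop:GAlog} $\K$ is a model of $T_{an,\exp}$, hence of $T_{\exp}$. If $\K$ is not o-minimal there is nothing to prove, since the disjunction is then satisfied by its second disjunct. Hence $\K$ is always either a model of $T_{\exp}$ or not o-minimal.

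One subtlety I would want to address carefully is the linear change of variables. The logarithm $\log_{\omega,h}$ and its inverse $\exp_{\omega,h}$ come with a fixed normalization dictated by $\log(1+\eps) = \sum_{n\ge1} (-1)^{n+1}\eps^n/n$, so in fact $\exp_{\omega,h}(\eps) = \sum_n \eps^n/n!$ on $o(1)$ and $\exp_{\omega,h}'(0) = 1$ automatically. Thus the normalization hypothesis of \prettyref{lem:omin} is already built in and no change of variables is actually needed; I would simply remark that the analytic exponential of \prettyref{thm:omega-log} satisfies $\exp'(0) = 1$, so \prettyref{lem:omin} applies directly.

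I do not expect a genuine obstacle here: the theorem is essentially a bookkeeping consequence of \prettyref{lem:omin} together with \prettyref{prop:GAlog}, both proved earlier. The only place that requires a moment's care is checking that the conclusion of \prettyref{lem:omin} — phrased as $\exp(x) > x^n$ for $x > \N$ — is indeed the same as the growth axiom at infinity used in \prettyref{defn:growth} and \prettyref{prop:GAlog}, i.e. that $\log(y) < y^{1/n}$ for all $y > \R$ and all positive $n$; since $\R$ is archimedean this is a straightforward translation, which the paragraph following \prettyref{lem:GAh} has already flagged. So the write-up will be short: cite \prettyref{lem:omin}, translate to the growth axiom, cite \prettyref{prop:GAlog}, and note the trivial case when o-minimality fails.
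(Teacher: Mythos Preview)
Your argument is correct, but it is organised differently from the paper's. The paper does not split on o-minimality; it splits on the condition $h(x)\prec\omega^{x}$ for all $x$. If this holds, then \prettyref{lem:GAh} gives the growth axiom and \prettyref{prop:GAlog} yields $T_{an,\exp}$. If it fails for some $x$, the paper computes directly: picking $n$ with $h(x)\geq \frac{1}{n}\omega^{x}$ and setting $y=\omega^{\frac{1}{n}\omega^{x}}$, one finds $\log(y)=\frac{1}{n}\omega^{h(x)}\geq \frac{1}{n}y$, so $e^{y}\leq y^{n}$, and \prettyref{lem:omin} (used contrapositively) rules out o-minimality. Your route instead applies \prettyref{lem:omin} in the forward direction --- o-minimality forces the growth axiom --- and then invokes \prettyref{prop:GAlog}; this is shorter and avoids the explicit computation, while the paper's version has the bonus of locating the dichotomy precisely at the condition $h(x)\prec\omega^{x}$. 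Your remark that $\exp_{\omega,h}'(0)=1$ holds automatically, so no renormalisation is needed before citing \prettyref{lem:omin}, is well taken and worth keeping.
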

\begin{proof}
We have already seen that if $h(x)\prec \omega^x$ for all $x\in \K$, then $\K$ is a model of $T_{\exp}$ (\prettyref{thm:omega-exp}). Now suppose that $h(x)\not\prec \omega^x$ for some $x$. Then there is some $n \in \N^{>0}$ such that $h(x) \geq \frac{1}{n}\omega^x$. Letting $y = \omega^{\frac{1}{n}\omega^x}$, we have $\log(y) = \frac{1}{n}\log(\omega^{\omega^x}) = \frac{1}{n}\omega^{h(x)} \geq \frac{1}{n}\omega^{\frac{1}{n}\omega^x} = \frac{1}{n}y$, hence $y^n \geq e^y$, contradicting o-minimality by \prettyref{lem:omin} (since $\exp$ extends the real exponential function, we have $\exp'(0)=1$, so the hypothesis of the lemma are satisfied).
\end{proof}

\section{Other exponential fields of series}

\subsection{Criterion for the existence of an omega-map}
In this section we try to classify all possible analytic logarithms on $\rf((G))_\kappa$. We show that in the case of omega-fields every analytic logarithm arises from an omega-map and some $h$.

\begin{thm}\label{thm:criterion-for-omega}
	Assume that $\K=\rf((G))_{\kappa}$ has an analytic logarithm $\log$. Then:
	\begin{enumerate}
		\item $\K$ has an omega-map $\omega:\K\cong G$ if and only if $G$ is isomorphic to $G^{>1}$ as a chain;
		\item moreover, if $G\cong G^{>1}$, there is an omega-map and a chain isomorphism $h:\K\cong\K^{>0}$ such that the
		logarithm induced by $\omega$ and $h$ coincides with the original logarithm.
	\end{enumerate}
\end{thm}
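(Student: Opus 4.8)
The plan is to prove both directions of (1) and then refine the construction to obtain (2). For the ``only if'' direction of (1): if $\omega : \K \cong G$ is an omega-map, then it restricts to an isomorphism of ordered sets from $\K^{>0}$ to $G^{>1}$ (since $\omega^0 = 1$ and $\omega$ is increasing). Composing with \emph{any} chain isomorphism $\K \cong \K^{>0}$ (for instance $h_0$ from the proof of \prettyref{lem:h-exists}) gives a chain isomorphism $\K \cong G^{>1}$, and since $\K \cong G$ via $\omega$ again as chains, we get $G \cong G^{>1}$ as chains. Actually it is cleaner to note directly that $\omega$ gives $G \cong \K \cong \K^{>0} \cong G^{>1}$.

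For the ``if'' direction of (1) together with (2): assume $\log$ is an analytic logarithm on $\K = \rf((G))_\kappa$ and fix a chain isomorphism $\psi : G \cong G^{>1}$. The key observation is that an analytic logarithm already gives us most of the structure we need. By Definition~\ref{defn:analytic-log}(3), $\log(G) = \K^{\uparrow}$, the group of purely infinite elements, so $\log$ restricts to an isomorphism of ordered groups $G \cong \K^{\uparrow}$ (it is injective since $\log$ is, and a group morphism by definition). Moreover $\K = \K^{\uparrow} \oplus \rf \oplus o(1)$ as an additive group, and $\K^{\uparrow} = \rf((G^{>1})) \cap \K$ is itself (the purely-infinite part of) a Hahn-type field, so there is a natural isomorphism of ordered groups $\rf((G^{>1}))_\kappa \cong \K^{\uparrow}$ induced by the identity on $G^{>1}$ and $\rf$. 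Composing, we want to build $\omega : (\K,+) \cong (G,\cdot)$. The idea is to let $\omega$ send the purely infinite part of $x$ through $\log^{-1}$-type maps while handling $\rf$ and $o(1)$ separately; concretely, I would first define $\omega$ on $\K^{\uparrow}$ by transporting $\psi$ appropriately, then extend across the direct sum decomposition. The cleanest route: define a chain isomorphism $h : \K \cong \K^{>0}$ directly so that the logarithm $\log_{\omega,h}$ it induces (for a suitable $\omega$ yet to be pinned down) equals the given $\log$, and use the freedom in $\psi$ to absorb the discrepancy.

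Here is the mechanism I expect to work for (2). Given the analytic $\log$, on fundamental monomials we want $\log(\omega^{\omega^x}) = \omega^{h(x)}$; reading this backwards, for $g \in G$ with $\log(g) \in \K^{\uparrow}$ we need $\log(g)$ to again be of the form $\omega^{(\cdot)}$, i.e.\ $\log(g) \in G$. This is \emph{not} automatic for an arbitrary analytic logarithm — $\log(g)$ is purely infinite but need not be a single monomial. So the real content is: using the chain isomorphism $G \cong G^{>1}$, build the omega-map $\omega : \K \cong G$ so that its image of fundamental monomials $\omega^{\omega^\K}$ is exactly the set of $g \in G$ on which $\log$ takes monomial values, and simultaneously so that $\log$ restricted there has the form $\omega^{h(\cdot)}$ for a chain isomorphism $h$. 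I would construct $\omega$ by a transfinite back-and-forth / direct limit argument analogous to \prettyref{lem:extending-eta}: one builds the chain $\Gamma$ (with $G = H(\Gamma)$ in the notation there, now a genuine requirement to be arranged, using $G \cong G^{>1}$) together with the omega-map in $\kappa$ stages, at each stage ensuring compatibility with $\log$. Then set $h(x)$ to be the unique element of $\K^{>0}$ with $\omega^{h(x)} = \log(\omega^{\omega^x})$; surjectivity and monotonicity of $h$ follow from surjectivity of $\log$ onto $\K^{\uparrow}$ and injectivity of $\omega$, and one checks $\log = \log_{\omega,h}$ on all of $\K^{>0}$ using that both sides agree on fundamental monomials, are group morphisms, send infinite products to infinite sums (condition built into the construction of $\omega$, and true of analytic $\log$ by... this is the delicate point), and agree on $\rf$ and $1+o(1)$ by Definition~\ref{defn:analytic-log}(1)--(2).

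\textbf{Main obstacle.} The crux is showing that the image $G$ of the omega-map can be arranged so that the given analytic $\log$ interacts with $\omega$ in the prescribed way — in particular that one can choose $\omega$ so that $\log$ sends $\omega^{\omega^x}$ into $G$ (monomial values on fundamental monomials) and sends the ``infinite product'' $\omega^{\sum_i \omega^{x_i} r_i}$ to the ``infinite sum'' $\sum_i \omega^{h(x_i)} r_i$. The freedom we have is exactly the choice of isomorphism $G \cong G^{>1}$ and the back-and-forth construction of $\omega$; the argument must exploit that $\K^{\uparrow}$, as the target $\log(G)$, is itself a $\kappa$-bounded Hahn group over $G^{>1}$, so that iterating $G \cong G^{>1}$ through $\log$ produces the tower of chains needed for \prettyref{lem:extending-eta}. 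Getting this transfinite construction to converge at stage $\kappa$ (using regularity of $\kappa$) while maintaining all the compatibility requirements with the \emph{fixed} logarithm $\log$ is the technical heart of the proof; everything else (monotonicity, the $\rf$ and $o(1)$ parts, surjectivity of $h$) is routine bookkeeping once $\omega$ is in place.
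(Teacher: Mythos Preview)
Your ``only if'' direction is fine and matches the paper. The trouble is entirely in the converse and in part (2): you have misidentified the main obstacle, and the transfinite construction you propose is unnecessary.

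The key idea you are missing is that the omega-map can be written down \emph{explicitly} in one line, using the given $\exp$ and the chain isomorphism $\psi:G\cong G^{>1}$. The paper simply sets
\[
\omega^{\sum_{i<\alpha} g_i r_i} \;:=\; e^{\sum_{i<\alpha} \psi(g_i) r_i}.
\]
Since $\psi(g_i)\in G^{>1}$, the exponent is purely infinite, so the right-hand side lies in $\exp(\K^{\uparrow})=G$ by condition (3) of \prettyref{defn:analytic-log}. This is visibly an ordered group morphism $(\K,+)\to(G,\cdot)$; surjectivity is immediate because every element of $G$ is $e^y$ for some $y\in\K^{\uparrow}$ and every such $y$ has the form $\sum_i \psi(g_i)r_i$ (as $\psi$ is onto $G^{>1}$). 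No back-and-forth, no limit at stage $\kappa$.

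Your ``main obstacle''---that $\log(g)$ need not be a monomial for $g\in G$---dissolves once $\omega$ is defined this way. For a fundamental monomial $\omega^{\omega^x}$ we have, by the formula with a single summand, $\omega^{\omega^x}=e^{\psi(\omega^x)}$; hence
\[
\log(\omega^{\omega^x}) \;=\; \psi(\omega^x) \;\in\; G^{>1}\subseteq G,
\]
a single monomial automatically. Now define $h$ by $\omega^{h(x)}:=\psi(\omega^x)$; this is a chain isomorphism $\K\cong\K^{>0}$ because $\omega:\K\cong G$ and $\psi:G\cong G^{>1}=\omega^{\K^{>0}}$. Applying the definition of $\omega$ to an arbitrary monomial gives
\[
\log\Bigl(\omega^{\sum_i \omega^{x_i}r_i}\Bigr)
= \log\Bigl(e^{\sum_i \psi(\omega^{x_i})r_i}\Bigr)
= \sum_i \omega^{h(x_i)} r_i,
\]
which is exactly $\log_{\omega,h}$ on $G$. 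The two logarithms then agree on $\rf^{>0}$ and $1+o(1)$ since both are analytic, hence everywhere.

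In short: you tried to build $\omega$ independently and then force compatibility with $\log$ by a limiting process, whereas the paper builds $\omega$ \emph{from} $\log$ (via $\exp$), so compatibility is tautological. The ``infinite products to infinite sums'' property you flagged as delicate is likewise automatic, because the formula for $\omega$ literally passes through $\exp$ of a Hahn sum.
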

\begin{proof}
	First note that $\K$, being an ordered field, is always isomorphic
	to $\K^{>0}$ as a chain. If there is an omega-map $\omega:\K\cong G$, we obtain an
	induced isomorphism from $G=\omega^{\K}$ to $G^{>1}=\omega^{\K^{>0}}$.

	For the opposite direction, assume that $G$ is isomorphic to $G^{>1}$
	as a chain and let $\psi:G\cong G^{>1}$ be a chain isomorphism. Define
	$\omega:\K\to G$ by
	\[
	\omega^{\sum_{i<\alpha}g_{i}r_{i}}=e^{\sum_{i<\alpha}\psi(g_{i})r_{i}}.
	\]
	In particular we have $\omega^g = e^{\psi(g)}$. Clearly $\omega$ is a morphism from $(\K,+,0,<)$ to $(G,\cdot,1,<)$
	and to prove that it is an omega-map it only remains to verify that
	it is surjective. To this aim recall that $\log(G)=\K^{\uparrow}$
	(by definition of analytic logarithm), so for the corresponding $\exp$
	we have $G=\exp(\K^{\uparrow})$. Since $e^{\sum_{i<\alpha}\psi(g_{i})r_{i}}$
	is an arbitrary element of $\exp(\K^{\uparrow})$, the surjectivity
	of $\omega$ follows. Now since $\psi:G\cong G^{>1}$ and
	$G=\omega^{\K}$, there is a chain isomorphism $h:\K\to\K^{>0}$ such
	that
	\[
	\psi(\omega^{x})=\omega^{h(x)}.
	\]
	Since $e^{\psi(\omega^x)} = \omega^{\omega^x}$, we obtain $\omega^{\omega^{x}}=e^{\omega^{h(x)}}$
	and thefore $\log(\omega^{\omega^{x}})=\omega^{h(x)}$. It then follows
	that $\log$ coincides with the analytic logarithm induced by $\omega$
	and $h$.
\end{proof}

\begin{cor}\label{cor:criterion-for-omega}
  Every analytic logarithm on an omega-field of the form $\K = \rf((G))_\kappa$ arises from some omega-map and some chain isomorphism $h : \K \cong \K^{>0}$.
\end{cor}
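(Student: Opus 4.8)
The statement to prove is Corollary~\ref{cor:criterion-for-omega}: every analytic logarithm on an omega-field $\K = \rf((G))_\kappa$ arises from some omega-map and some chain isomorphism $h : \K \cong \K^{>0}$.

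The plan is to derive this directly from Theorem~\ref{thm:criterion-for-omega}. The key observation is that the hypothesis ``$\K$ is an omega-field'' already gives us, by definition, an omega-map $\omega_0 : \K \cong G$, and hence in particular a chain isomorphism between $\K$ and $G$. Since $\K$ is an ordered field, it is also isomorphic as a chain to $\K^{>0}$, and composing $\omega_0$ with (the inverse of) any such field-theoretic chain isomorphism $\K \cong \K^{>0}$ followed by $\omega_0$ again yields a chain isomorphism $G \cong G^{>1}$. Concretely, $G = \omega_0^{\K} \cong \omega_0^{\K^{>0}} = G^{>1}$. So the chain condition ``$G \cong G^{>1}$'' in part~(1) of Theorem~\ref{thm:criterion-for-omega} is automatically satisfied for omega-fields.

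Granting that, part~(2) of Theorem~\ref{thm:criterion-for-omega} applies verbatim: given the analytic logarithm $\log$ on $\K$ (which exists by hypothesis) and the chain isomorphism $\psi : G \cong G^{>1}$ just produced, the theorem furnishes an omega-map $\omega$ and a chain isomorphism $h : \K \cong \K^{>0}$ such that the logarithm induced by $\omega$ and $h$ (in the sense of Definition~\ref{defn:omega-log}) coincides with the original $\log$. That is exactly the assertion of the corollary. So the proof is essentially a one-line deduction: combine the two halves of Theorem~\ref{thm:criterion-for-omega}, using that an omega-field automatically satisfies the chain hypothesis of part~(1).

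There is no real obstacle here; the only thing to be careful about is making explicit why $G \cong G^{>1}$ follows from $\K$ being an omega-field, since that is the hypothesis that unlocks part~(2) of the theorem. I would write the proof as follows.

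\begin{proof}
  Since $\K = \rf((G))_\kappa$ is an omega-field, it admits an omega-map, in particular a chain isomorphism $\K \cong G$. As $\K$ is an ordered field, it is also isomorphic to $\K^{>0}$ as a chain, so $G \cong \K \cong \K^{>0} \cong G$, and restricting an omega-map $\omega_0 : \K \cong G$ to $\K^{>0}$ gives $G^{>1} = \omega_0^{\K^{>0}} \cong \omega_0^{\K} = G$; thus $G \cong G^{>1}$ as chains. The original analytic logarithm on $\K$ then satisfies the hypotheses of \prettyref{thm:criterion-for-omega}, and part~(2) of that theorem yields an omega-map $\omega : \K \cong G$ and a chain isomorphism $h : \K \cong \K^{>0}$ such that the analytic logarithm induced by $\omega$ and $h$, as in \prettyref{defn:omega-log}, coincides with the given one.
\end{proof}
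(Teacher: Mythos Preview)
Your proof is correct and follows the paper's approach: the corollary is stated there without proof, as an immediate consequence of \prettyref{thm:criterion-for-omega}, combining part~(1) (an omega-field automatically satisfies $G \cong G^{>1}$) with part~(2). The displayed chain ``$G \cong \K \cong \K^{>0} \cong G$'' is a small slip---the final term should be $G^{>1}$, or the chain can simply be omitted---but the sentence that follows gives the correct argument.
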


\subsection{The iota-map} Our next goal is to show that $\rf((G))_\kappa$ may have an analytic logarithm without being an omega-field. This will be proved in the next subsection. Here we recall the following two results from \cite{Kuhlmann2005} with a sketch of the proofs for the reader's convenience (considering that the notations are different). We use the same notation $H(\Gamma)=\left(\prod t^{\Gamma C} \right)_{\kappa}$ employed in  \prettyref{lem:extending-eta}, with $C=(\rf,+,<)$.

\begin{fact}[\cite{Kuhlmann2005}] \label{fact:iota-log} Let $\rf$ be an exponential field. Let $\Gamma$ be a chain and suppose there is an isomorphism of chains $\iota:\Gamma\cong H(\Gamma)^{>1}$. Let $G=H(\Gamma)$  and let $\K = \rf((G))_\kappa$. Then:
	\begin{enumerate}
		\item there is an analytic logarithm $\log:\K^{>0}\to \K$ such that $\log(t^\gamma) = \iota(\gamma)\in G$.
		\item if $\rf$ is a model of $T_{an,\exp}$ and $\iota(\gamma)<t^{\gamma r}$ for each $r\in \rf^{>0}$, then $\log$ satisfies the growth axiom at infinity, thus making $\K$ into a model of $T_{an,\exp}$.\footnote{In the cited paper the authors consider $\rf = \R$, but the general case is the same.}
	\end{enumerate}
\end{fact}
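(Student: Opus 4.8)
The plan is to construct the logarithm explicitly, mimicking the construction in the proof of \prettyref{thm:omega-log} but using $\iota$ in place of the omega-map data. First I would define $\log$ on the group $G = H(\Gamma)$ of monomials. A typical $g \in G$ is of the form $g = \prod_{i<\alpha} t^{\gamma_i r_i}$ with $(\gamma_i)_{i<\alpha}$ decreasing in $\Gamma$; guided by the principle that $\log$ takes products into sums, I would set $\log(g) = \sum_{i<\alpha} \iota(\gamma_i) r_i$. One must check this sum makes sense in $\K = \rf((G))_\kappa$: each $\iota(\gamma_i) \in H(\Gamma)^{>1} = G^{>1} \subseteq G$ is a single monomial, the $\gamma_i$ are decreasing and $\iota$ is order-preserving, so the $\iota(\gamma_i)$ are distinct and strictly decreasing, giving a well-defined series of reverse order type $\alpha < \kappa$. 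Thus $\log(g) \in \K^{\uparrow} = \rf((G^{>1}))\cap\K$. Then I would extend to all of $\K^{>0}$ by writing $x = gr(1+\eps)$ with $g\in G$, $r\in\rf^{>0}$, $\eps\prec 1$, and setting $\log(x) = \log(g) + \log(r) + \sum_{n\geq1}(-1)^{n+1}\eps^n/n$, exactly as in \prettyref{eq:log-x}, where $\log(r)$ is the given logarithm on $\rf$.

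Next I would verify the defining properties of an analytic logarithm (\prettyref{defn:analytic-log}). That $\log$ is an order-preserving group morphism $(\K^{>0},\cdot,1,<)\to(\K,+,0,<)$ is routine: multiplicativity on $G$ follows since $\iota$ is additive-to-additive on exponents and the Cauchy product of monomials adds exponents componentwise; compatibility with the $\rf^{>0}$ and $1+o(1)$ factors is the standard computation; order-preservation reduces to checking leading terms, where $\iota$ being a chain isomorphism onto $G^{>1}$ does the work. Property (1) (extending the logarithm on $\rf$) and property (2) (the Taylor expansion on $1+o(1)$, with inverse $\eps\mapsto\sum \eps^n/n!$) hold by construction. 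For property (3), $\log(G)=\K^{\uparrow}$: containment $\subseteq$ was noted above; for $\supseteq$, given $\sum_{i<\alpha} g_i' r_i \in \K^{\uparrow}$ with $g_i' \in G^{>1}$, surjectivity of $\iota : \Gamma \cong G^{>1}$ lets me pick $\gamma_i \in \Gamma$ with $\iota(\gamma_i) = g_i'$, and since $\iota$ is order-reversing-preserving the $\gamma_i$ are decreasing, so $g = \prod_i t^{\gamma_i r_i} \in G$ satisfies $\log(g) = \sum_i g_i' r_i$. This establishes surjectivity of $\log$, and with (1)--(3) it is an analytic logarithm; finally $\log(t^\gamma) = \iota(\gamma)$ by the case $\alpha=1$, $r_0=1$. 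This proves part (1).

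For part (2), assume $\rf\models T_{an,\exp}$ and $\iota(\gamma) < t^{\gamma r}$ for all $r\in\rf^{>0}$. By \prettyref{prop:GAlog} it suffices to check the growth axiom at infinity, i.e. $\log(y)<y^r$ for all $r\in\rf^{>0}$ and $y>\rf$. Following the argument of \prettyref{lem:GAh}: first for $y = t^\gamma$ a single monomial, $\log(y) = \iota(\gamma) < t^{\gamma r} = y^r$ by hypothesis. For general $y = \prod_{i<\alpha} t^{\gamma_i r_i} > \rf$ a monomial, the leading exponent satisfies $\gamma_0 > $ (the value corresponding to $1$) and $\log(y) = \sum_{i<\alpha}\iota(\gamma_i)r_i$; the leading term of this is $\iota(\gamma_0)r_0 < t^{\gamma_0 a}$ for any $a\in\rf^{>0}$, and choosing $a$ appropriately relative to $r, r_0$ gives $\log(y) < t^{\gamma_0 \cdot r r_0} \asymp (t^{\gamma_0 r_0})^r \preceq y^r$ — here one uses that $t^{\gamma_0 r_0}$ is the leading monomial of $y$ and that raising to the power $r$ via $e^{r\log(\cdot)}$ is order-preserving. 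Finally for arbitrary $y = t^{\gamma_0\cdots}\cdot s(1+\eps) > \rf$ with $s\in\rf^{>0}$, $\eps\prec1$, one absorbs the real factor exactly as at the end of \prettyref{lem:GAh}: $\log(y) < \log(2s) + \log(g) < g^{r/2} < y^r$.

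I expect the main obstacle to be bookkeeping rather than conceptual: making sure at each stage that the series written down genuinely have reverse-well-ordered support of order type $<\kappa$ (so they lie in $\K$, not merely in the full Hahn field $\rf((G))$), and keeping straight the order-direction conventions — that $H(\Gamma)$ uses reverse-well-ordered supports, that $\iota$ maps $\Gamma$ onto $H(\Gamma)^{>1}$ preserving order, and that $\log$ maps the ``big'' monomials to purely infinite elements. Since \prettyref{thm:omega-log} and \prettyref{lem:GAh} carry out structurally identical computations, the honest proof amounts to observing that those arguments go through verbatim with $\omega^{\omega^x}$ replaced by $t^\gamma$ and $\omega^{h(x)}$ replaced by $\iota(\gamma)$; I would state this parallel explicitly and only spell out the points where the present hypotheses (surjectivity of $\iota$ onto $G^{>1}$, the bound $\iota(\gamma)<t^{\gamma r}$) enter.
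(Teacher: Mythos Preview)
Your proposal is correct and follows essentially the same approach as the paper: define $\log$ on $G$ by $\log\left(\prod_{i<\alpha}t^{\gamma_i r_i}\right)=\sum_{i<\alpha}\iota(\gamma_i)r_i$, extend to $\K^{>0}$ via the decomposition $x=gr(1+\eps)$, and for part (2) repeat the growth-axiom computation of \prettyref{lem:GAh} with $t^{\gamma}$ and $\iota(\gamma)$ in place of $\omega^{\omega^x}$ and $\omega^{h(x)}$. The paper's own proof is a terse sketch that simply states this parallel; your version spells out the verifications (well-definedness in $\K$, surjectivity onto $\K^{\uparrow}$, the three-stage growth estimate) more carefully, but there is no difference in strategy.
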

\begin{proof}
	Define $\log=\log_{\iota}$ on $G$ by
	\[
	\log(\prod_{i<\alpha}t^{\gamma_{i}r_{i}})=\sum_{i<\alpha}\iota(\gamma_{i})r_{i}\in\rf((G^{>1}))_{\kappa}
	\]
	Given
	$x\in\K^{>0}$, write
	$
	x=gr(1+\eps)
	$
	for some $r\in\rf^{>0}$, $g\in G$ and $\eps\in o(1)$; now define
	$
	\log(x)=\log(g) + \log(r) + \sum_{n=1}^{\infty}(-1)^{n+1}\frac{\eps^{n}}{n},
	$
	where $\log(r)$ refers to the given logarithm on $\rf$, and observe that since $\eps\prec1$ and $\kappa>\omega$ the infinite sum belongs to $\K=\rf((G))_{\kappa}$.
	Clearly $\log$ is an analytic logarithm and (1) is proved. The verification of point (2) is as in  \prettyref{thm:omega-log}.
\end{proof}
\begin{fact}[\cite{Kuhlmann2005}]\label{fact:iota}

	Fix a chain $\Gamma_{0}$ and a chain embedding $\iota_{0}:\Gamma_{0}\to H({\Gamma_{0}})^{>1}$ (for instance $\iota_0(\gamma) = t^\gamma$). Then:
	\begin{enumerate}
		\item there is a chain $\Gamma\supseteq \Gamma_0$
		and a chain isomorphism
		$
		\iota:\Gamma\cong H(\Gamma)^{>1}
		$
		extending $\iota_0$;
		\item if $\iota_0(\gamma)<t^{\gamma r}$ for every $\gamma\in \Gamma_0$ and $r\in C^{>0}$, then $\iota(\gamma)<t^{\gamma r}$ for every $\gamma\in \Gamma$ and $r\in C^{>0}$.
	\end{enumerate}
\end{fact}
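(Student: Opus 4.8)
The plan is to rerun the transfinite construction from the proof of \prettyref{lem:extending-eta} essentially verbatim, with the positive cone $H(\Gamma)^{>1}$ in place of $H(\Gamma)$ as the target of the maps being chased, and --- for part (2) --- to drag one extra invariant through the induction.

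For (1), I would build an increasing sequence of chains $\Gamma_{0}\subseteq\Gamma_{1}\subseteq\cdots$ with transition embeddings $j_{\alpha,\beta}$ and chain embeddings $\iota_{\beta}:\Gamma_{\beta}\to H(\Gamma_{\beta})^{>1}$ extending $\iota_{0}$. At a successor stage, given $\iota_{\beta}$, pick a chain $\Gamma_{\beta+1}$ with an isomorphism $f_{\beta}:H(\Gamma_{\beta})^{>1}\cong\Gamma_{\beta+1}$, put $j_{\beta}:=f_{\beta}\circ\iota_{\beta}$, and note that the functorial map $H(j_{\beta}):H(\Gamma_{\beta})\to H(\Gamma_{\beta+1})$ is an order-preserving group embedding, hence restricts to an embedding $H(\Gamma_{\beta})^{>1}\to H(\Gamma_{\beta+1})^{>1}$; then $\iota_{\beta+1}:=H(j_{\beta})\circ f_{\beta}^{-1}$ is a chain embedding $\Gamma_{\beta+1}\to H(\Gamma_{\beta+1})^{>1}$ with $\iota_{\beta+1}\circ j_{\beta}=H(j_{\beta})\circ\iota_{\beta}$, exactly as in the proof of \prettyref{lem:extending-eta}. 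At limit stages take direct limits and use that $H$ commutes with the relevant colimits. Since $\kappa$ is regular, at stage $\kappa$ the embedding $\iota_{\kappa}:\Gamma_{\kappa}\to H(\Gamma_{\kappa})^{>1}$ is onto: any $g\in H(\Gamma_{\kappa})^{>1}$ has support of reverse order type $<\kappa$, hence lies in the image of $H(\Gamma_{\beta})^{>1}$ for some $\beta<\kappa$, and then $f_{\beta}(g)\in\Gamma_{\beta+1}\subseteq\Gamma_{\kappa}$ satisfies $\iota_{\kappa}(f_{\beta}(g))=g$. Set $\Gamma=\Gamma_{\kappa}$ and $\iota=\iota_{\kappa}$.

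For (2), I would first reformulate the growth condition: directly from the definition of the ordering on $H(\Gamma)$, for $g\in H(\Gamma)^{>1}$ and $\gamma\in\Gamma$ one has $g<t^{\gamma r}$ for all $r\in C^{>0}$ if and only if every element of $\supp(g)$ is $<\gamma$ (if some support element is $\geq\gamma$, then for small $r>0$ the term of $g\cdot t^{-\gamma r}$ sitting on top has positive coefficient; conversely if every element of $\supp(g)$ is $<\gamma$ then in $g\cdot t^{-\gamma r}$ the coefficient at $\gamma$ equals $-r<0$ and sits on top, so $g\cdot t^{-\gamma r}<1$). Thus (2) is equivalent to the invariant $(\ast_{\beta})$: $\supp(\iota_{\beta}(\gamma))<\gamma$ for every $\gamma\in\Gamma_{\beta}$, which at $\beta=0$ is precisely the hypothesis. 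I would maintain $(\ast_{\beta})$ through the induction. It passes through limits at once, since $\iota_{\lambda}(j_{\beta,\lambda}(\theta))$ has support the $j_{\beta,\lambda}$-image of $\supp(\iota_{\beta}(\theta))$ and $j_{\beta,\lambda}$ is order-preserving. For the successor step, write a generic $\gamma'\in\Gamma_{\beta+1}$ as $\gamma'=f_{\beta}(g)$ with $g\in H(\Gamma_{\beta})^{>1}$; then $\supp(\iota_{\beta+1}(\gamma'))$ is the $j_{\beta}$-image of $\supp(g)$, so since $f_{\beta}$ is order-preserving it suffices to prove $\iota_{\beta}(\delta)<g$ for each $\delta\in\supp(g)$. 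If $\delta$ is the top element $\delta_{0}$ of $\supp(g)$, with leading coefficient $s_{0}>0$, then $(\ast_{\beta})$ gives $\iota_{\beta}(\delta_{0})<t^{\delta_{0}s_{0}/2}$, and $t^{\delta_{0}s_{0}/2}<g$ because $g/t^{\delta_{0}s_{0}/2}$ has positive leading coefficient $s_{0}/2$. If $\delta<\delta_{0}$, then by $(\ast_{\beta})$ the support of $\iota_{\beta}(\delta)$ lies strictly below $\delta_{0}$, while $\delta_{0}$ is the leading monomial of $g$, so $\iota_{\beta}(\delta)/g$ has negative leading coefficient $-s_{0}$ and again $\iota_{\beta}(\delta)<g$. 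This gives $(\ast_{\beta+1})$, and $(\ast_{\kappa})$ is the conclusion of (2).

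Since the construction is a rerun of \prettyref{lem:extending-eta}, the only genuinely new content --- and the part I expect to be most delicate --- is part (2): finding the support reformulation of the growth condition and verifying that it self-propagates through the successor step. The fiddly point there is the split between $\delta$ being or not being the largest element of $\supp(g)$, together with keeping track of how the group ordering of $H(\Gamma)$ reads off signs of leading coefficients of quotients.
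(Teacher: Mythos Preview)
Your proof is correct and follows essentially the same approach as the paper: part (1) is a verbatim rerun of \prettyref{lem:extending-eta} with $H(\Gamma)^{>1}$ in place of $H(\Gamma)$, and part (2) is the same induction on $\beta$ with the same support reformulation of the growth condition. The only cosmetic difference is that the paper observes at once that it suffices to check the single inequality $\iota_{\beta}(\gamma_{0})<g$ for the \emph{top} exponent $\gamma_{0}$ of $g$ (since $j_{\beta}$ is order-preserving, $j_{\beta}(\gamma_{0})<\eta$ already forces all of $j_{\beta}(\supp g)<\eta$), whereas you split into the two cases $\delta=\delta_{0}$ and $\delta<\delta_{0}$; your second case is correct but superfluous, as $\iota_{\beta}(\delta)<\iota_{\beta}(\delta_{0})<g$ already handles it.
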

\begin{proof}
	The proof of (1) is similar to the proof of \prettyref{lem:extending-eta}, the only difference is that we use $H(\Gamma)^{>1}$ instead of $H(\Gamma)$. Starting with the initial chain embedding $\iota_0:\Gamma_0 \to H(\Gamma_0)^{>1}$ we inductively produce chain embeddings  $\iota_\beta:\Gamma_\beta \to H(\Gamma_\beta)^{>1}$ and $j_{\alpha,\beta}:\Gamma_\alpha\to \Gamma_\beta$ for $\alpha<\beta$. The step from $\beta$ to $\beta+1$ is based on the following diagram
	\begin{equation}
 \xymatrix{\Gamma_{\beta}\ar[d]_{j_{\beta}}\ar[r]_{\iota_{\beta}} & H(\Gamma_{\beta})^{>1}\ar[d]^{H(j_{\beta})}\ar[dl]_{f_{\beta}}\\
		\Gamma_{\beta+1}\ar[r]^{\iota_{\beta+1}} & H(\Gamma_{\beta+1})^{>1}
	}
	\label{eq:commutative-2}
	\end{equation}
	where $\Gamma_{\beta+1}$ is a chain isomorphic to $H(\Gamma_\beta)^{>1}$, $f_\beta$ is a chain isomorphism, and the embeddings $j_\beta$ and $\iota_{\beta+1}$ are defined so that the diagram commutes.
	Limit stages are handled as in \prettyref{lem:extending-eta}.
	Finally we set $\Gamma = \Gamma_\kappa = \varinjlim_{\beta<\kappa} \Gamma_\beta$ and $\iota=\iota_\kappa$ and observe that $\iota:\Gamma\to H(\Gamma)^{>1}$ is a chain isomorphism.

	To prove (2), we show by induction on $\beta<\kappa$ that $\iota_\beta(\gamma)<t^{\gamma r}$ for every $\gamma\in \Gamma_\beta$ and $r\in C^{>0}$, provided this holds for $\beta=0$. Since limit stages are easy, it suffices to prove the induction step from $\beta$ to $\beta+1$. So let
	$\eta \in\Gamma_{\beta+1}$. Then $\eta=f_\beta(x)$ for some
	$x = \prod_{i}t^{\gamma_{i}r_{i}}\in\left(\prod t^{\Gamma_{\beta}C}\right)_{\kappa}^{>1}$. The embedding $\iota_{\beta}$
	sends $\eta$ to $\prod_{i}t^{j_\beta(\gamma_{i})r_{i}}$ where $j_\beta= f_{\beta}\circ\iota_{\beta}$
	is the embedding of $\Gamma_{\beta}$ into $\Gamma_{\beta+1}$. We
	must prove that $\prod_{i}t^{j_\beta(\gamma_{i})r_{i}}<t^{\eta r}$ for
	every $r\in C^{>0}$. This is equivalent to saying $j_\beta(\gamma_{0})<\eta$,
	which in turn is equivalent to $\iota_{\beta}(\gamma_{0})<\prod_{i}t^{\gamma_{i}r_{i}}$.
	The latter inequality follows from the inductive hypothesis
	and the proof is complete.
\end{proof}

\subsection{A model without an omega-map}
We can now show that there are fields of the form $\R((G))_\kappa$ which admit an analytic logarithm but not an omega-map.
\begin{thm} \label{thm:no-omega} Given a regular uncountable cardinal $\kappa$, there is $G$ such that the field $\K=\R((G))_{\kappa}$ has an analytic logarithm making it into a model of $T_{\exp}$ but $G$ is not isomorphic to $G^{>1}$ as a chain (so $\K$ is not an omega-field).
\end{thm}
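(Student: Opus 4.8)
The plan is to build a chain $\Gamma$ together with an isomorphism $\iota:\Gamma\cong H(\Gamma)^{>1}$, where $H(\Gamma)=\left(\prod t^{\Gamma\R}\right)_\kappa$, in such a way that $G:=H(\Gamma)$ is \emph{not} order-isomorphic to $G^{>1}$, and then invoke \prettyref{fact:iota-log} to equip $\K=\R((G))_\kappa$ with an analytic logarithm which is moreover a model of $T_{\exp}$. By \prettyref{thm:criterion-for-omega}(1), failure of $G\cong G^{>1}$ is exactly failure of the omega-field property, so this suffices.

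The key is therefore to arrange an order-theoretic asymmetry between $G$ and $G^{>1}$ that survives the construction. First I would make $\Gamma_0$ carry a distinguished feature that $H(\Gamma_0)^{>1}$ lacks on the nose: for instance, let $\Gamma_0$ have a least element $\gamma_{\min}$. Note that $H(\Gamma)$ has no least element for any $\Gamma$ (it is a nontrivial ordered group), and $H(\Gamma)^{>1}$ also has no least element; more to the point, $G$ always has a least strictly-negative-exponent behavior making it look ``symmetric'' around $1$, whereas $G^{>1}$ is bounded below by $1$. Concretely, $G$ has no endpoints while $G^{>1}=H(\Gamma)^{>1}$ has an infimum (namely $1$, not attained) but is cofinal in $G$. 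The cleaner invariant to track: in $G=H(\Gamma)$ every element has a predecessor-free neighborhood symmetric under $x\mapsto x^{-1}$, and $G$ is a divisible ordered \emph{group}, so it has no least element nor any element with an immediate successor; but the real obstruction to $G\cong G^{>1}$ should be a coinitiality/cofinality or ``first archimedean class'' invariant. I would pick the invariant: \emph{the order type of the set of elements below the first nontrivial archimedean class}, or more robustly, exploit that in $G^{>1}$ the element $1$ is a lower bound while in $G$ there is none — but since $G^{>1}$ does not contain $1$, an isomorphism $G\cong G^{>1}$ would be an isomorphism between two chains without endpoints, so this alone fails. Hence the honest approach: choose $\Gamma_0$ with an order-theoretic property $P$ that is preserved by the inductive step producing $\iota$, that holds for $\Gamma$, that transfers to $G=H(\Gamma)$ via the structure of Hahn groups, but provably fails for $G^{>1}$. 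A workable $P$: ``$\Gamma$ has a maximum'' is destroyed; instead use ``the coinitiality of $\Gamma$ is $\omega$'' versus controlling the coinitiality of $H(\Gamma)^{>1}$. In $H(\Gamma)^{>1}$, elements approach $1$ from above, and the coinitiality of $H(\Gamma)^{>1}$ equals the coinitiality of $\Gamma$ reversed, i.e. the cofinality of $\Gamma$; so if I build $\Gamma$ with cofinality $\neq$ coinitiality — say cofinality $\kappa$ but coinitiality $\omega$ — then $G=H(\Gamma)$ has coinitiality $=$ cofinality $=$ (something symmetric, since $G$ is a group, both equal), whereas $G^{>1}$ has coinitiality $=\mathrm{cf}(\Gamma)$ and cofinality $=$ cofinality of $G$. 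Matching these up forces a contradiction with an appropriate choice.

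So the concrete plan: (i) Fix $\Gamma_0$ to be a chain engineered so that, writing out the fixed-point construction of \prettyref{fact:iota}(1), the resulting $\Gamma\cong H(\Gamma)^{>1}$ has $\mathrm{cf}(\Gamma)\neq\mathrm{ci}(\Gamma)$ — e.g. start the induction from $\Gamma_0=\omega^*$ (reverse $\omega$) or another seed and check the fixed point has coinitiality $\omega$ but cofinality $\kappa$; one should verify that the direct-limit-at-$\kappa$ step of \prettyref{fact:iota} can only increase cofinality to $\kappa$ while coinitiality is inherited from $\Gamma_0$ and stabilizes at $\omega$. (ii) Additionally take $\iota_0(\gamma)<t^{\gamma r}$ for all $r$ so that \prettyref{fact:iota}(2) gives $\iota(\gamma)<t^{\gamma r}$, hence by \prettyref{fact:iota-log}(2) the logarithm satisfies the growth axiom at infinity and $\K\models T_{\exp}$ (indeed $T_{an,\exp}$). (iii) Compute that $G=H(\Gamma)$, being a nontrivial Hahn group over $\Gamma$, has coinitiality equal to its cofinality equal to $\mathrm{cf}(\Gamma)=\kappa$ (since a decreasing sequence in $G$ is governed by its leading exponents, which live in $\Gamma$, and $G$ is symmetric under negation), while $G^{>1}=H(\Gamma)^{>1}$ has coinitiality $\mathrm{cf}(\Gamma)=\kappa$ but cofinality... here I must be careful: $G^{>1}$ is cofinal in $G$ so has cofinality $\kappa$ too. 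This means coinitiality/cofinality won't separate them; I then fall back to a genuinely different invariant — the existence of a \emph{convex subgroup structure}: $G$ is a group, $G^{>1}$ is not, and an order-isomorphism need not respect that. The real separating invariant I would ultimately use: $G\cong G^{>1}$ is impossible because $G^{>1}$ has an element (just above $1$) whose convex hull away from $1$ has a specific order type forced by $\iota$, detectably different from any ``symmetric'' portion of $G$; or, most cleanly, choose $\Gamma_0$ so that $\Gamma_0$ embeds as an initial segment and track that $\Gamma$ has a coinitial $\omega$-sequence $(\delta_n)$ with the property that the cuts they fill in $H(\Gamma)$ versus $H(\Gamma)^{>1}$ differ.

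The main obstacle, honestly, is pinning down the order-theoretic invariant that (a) is preserved through the transfinite fixed-point construction of \prettyref{fact:iota}, (b) genuinely distinguishes $G$ from $G^{>1}$ as \emph{abstract chains} (not as ordered groups), and (c) is simultaneously compatible with the growth condition $\iota_0(\gamma)<t^{\gamma r}$ needed for the $T_{\exp}$ conclusion. I expect the cleanest resolution is to take $\Gamma_0$ with a maximum element $M$ satisfying $\iota_0(M)=t^M$ but chosen so that in the limit $\Gamma$ the image $H(\Gamma)^{>1}$ has cofinality $\mathrm{cf}(\Gamma)=\kappa$ while its coinitiality is $\omega$ (elements decreasing to $1$ come from a coinitial $\omega$-chain of exponents one deliberately seeds in $\Gamma_0$), whereas $G=H(\Gamma)$ — being an ordered group — has equal coinitiality and cofinality, both $\kappa$; since $\omega\neq\kappa$, $G^{>1}$ (coinitiality $\omega$) cannot be order-isomorphic to $G$ (coinitiality $\kappa$), so a fortiori $\K$ has no omega-map by \prettyref{thm:criterion-for-omega}(1). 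Then \prettyref{fact:iota-log} supplies the analytic logarithm and, via the growth clause, the model of $T_{\exp}$, completing the proof.
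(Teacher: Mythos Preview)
Your final paragraph lands on the paper's own invariant---a coinitiality mismatch between $G$ and $G^{>1}$ seeded by $\Gamma_0$ and preserved through the fixed-point construction of \prettyref{fact:iota}---so the strategy is right. But two concrete gaps remain, and both are exactly what the paper's choice of $\Gamma_0$ is designed to close.

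First, you never commit to a $\Gamma_0$ and $\iota_0$ that simultaneously satisfy the growth condition and have the needed order-theoretic asymmetry. Your suggestion $\iota_0(M)=t^M$ for a maximum $M$ actually \emph{violates} $\iota_0(\gamma)<t^{\gamma r}$ (take $r=1$); and $\Gamma_0=\omega^*$ alone gives no obvious $\iota_0$. The paper takes $\Gamma_0=\omega_1\times\Z$ with the lexicographic order and $\iota_0((\alpha,n))=t^{(\alpha,n-1)}$: the shift by $-1$ in the $\Z$-coordinate gives the growth condition for free, while the image of $\iota_0$ is exactly $\{t^\gamma:\gamma\in\Gamma_0\}$, hence both cofinal and coinitial in $H(\Gamma_0)^{>1}$. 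Second, your assertion that ``coinitiality is inherited from $\Gamma_0$'' while ``cofinality increases to $\kappa$ at the direct limit'' is neither argued nor correct for this construction. The missing step is an induction: if $\iota_\beta(\Gamma_\beta)$ is cofinal and coinitial in $H(\Gamma_\beta)^{>1}$, then so is the image of $\Gamma_\beta$ in $\Gamma_{\beta+1}$, and this persists to the limit; hence $\Gamma_0$ is cofinal and coinitial in the final $\Gamma$, giving $\mathrm{cf}(\Gamma)=\omega_1$ (not $\kappa$) and $\mathrm{ci}(\Gamma)=\omega$. Then $G=H(\Gamma)$, being an ordered group (use $x\mapsto x^{-1}$), has coinitiality equal to its cofinality $\omega_1$, whereas $G^{>1}\cong\Gamma$ has coinitiality $\omega$, so $G\not\cong G^{>1}$. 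Your mid-stream computation ``$\mathrm{ci}(H(\Gamma)^{>1})=\mathrm{cf}(\Gamma)$'' is the wrong way round---elements just above $1$ have \emph{small} leading exponent, so $\mathrm{ci}(H(\Gamma)^{>1})=\mathrm{ci}(\Gamma)$---and this error is what made you abandon the correct approach before circling back to it.
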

\begin{proof}
	Start with the chain $\Gamma_{0}=\omega_{1}\times\Z$ ordered lexicographically and the initial
	embedding $\iota_{0}:\Gamma_{0}\to\left(\prod_{\kappa}t^{\Gamma_{0}\rf}\right)^{>1} = H(\Gamma_0)^{>1}$
	given by $\iota_{0}((\alpha,n))=t^{(\alpha,n-1)}$. Define $\Gamma = \varinjlim_{\beta<\kappa}\Gamma_\beta$ and $\iota:\Gamma\cong H(\Gamma)^{>1}$ as in \prettyref{fact:iota} and note that  $\iota(\gamma)<t^{\gamma r}$ for every $\gamma\in \Gamma$ and $r\in \rf^{>0}$ (since this holds for $\iota_0$ and is preserved at later stages).
	Now take $G=H(\Gamma)$ and put on the field $\K=\rf((G))_\kappa$ the $\log$ induced by $\iota$ as in \prettyref{fact:iota-log}. By the above inequalities the $\log$ satisfies the growth axiom at infinity, so $\K$ is a model of $T_{\exp}$.
	It remains to show that $G\not\cong G^{>1}$ as a chain. Note that the image of $\iota_{0}:\Gamma_{0}\to H(\Gamma_{0})^{>1}=\Gamma_1$
	is cofinal and coinitial in $H(\Gamma_{0})^{>1}$. It
	follows that for each $\beta\leq\kappa$, the image of $\iota_{\beta}:\Gamma_{\beta}\to H(\Gamma_{\beta})^{>1}$
	is cofinal and coinitial in $H(\Gamma_{\beta})^{>1}= \Gamma_{\beta+1}$. Likewise, by an easy induction, for each $\beta\geq 0$ the image of $\Gamma_0$ in $\Gamma_\beta$ is initial and cofinal.
	In particular the image of $\Gamma_{0}$ in the final chain $\Gamma_\kappa=\Gamma \cong H(\Gamma)^{>1}$
	is coinitial and cofinal. Since $\Gamma_{0}$ has cofinality $\omega_{1}$
	and coinitiality $\omega$, it follows that $\Gamma$ and $H(\Gamma)^{>1}$
	have cofinality $\omega_{1}$ and coinitiality $\omega$. Now observe
	that $1/x$ is an order-reversing bijection from $H(\Gamma)^{<1}$
	to $H(\Gamma)^{>1}$, and therefore $H(\Gamma)=H(\Gamma)^{<1}\cup1\cup H(\Gamma)^{>1}$
	has cofinality and coinitiality both equal to $\omega_{1}$. We conclude
	that $G=H(\Gamma)$ cannot be chain isomorphic to $G^{>1}$, because
	they have different coinitiality.
\end{proof}

\section{Omega-groups}
A group isomorphic to the value group of an omega-field will be called \textbf{omega-group}. It would be interesting to give a characterization of the omega-groups. As a partial result, we characterise those groups $G$ such that $\rf((G))_\kappa$ is an omega-field. We also clarify the relation between having an omega-map and having an analytic logarithm.

\begin{prop}
Let $\K$ be a field of the form $\rf((G))_\kappa$. Then:
\begin{enumerate}
	\item if $\K$ is an omega-field, then $G$ is isomorphic to $\left(\prod t^{\Gamma \rf} \right)_\kappa$, where the chain $\Gamma$ is order-isomorphic to (the underlying chain of) $G$ itself;
	\item if $\K$ has an analytic logarithm, then $G$ is isomorphic to $\left(\prod t^{\Gamma \rf} \right)_\kappa$, where $\Gamma$ is order-isomorphic to $G^{>1}$.
\end{enumerate}
\end{prop}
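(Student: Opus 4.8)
The plan is to establish both parts by combining the structural results already proved in the paper with the classical Hahn-type characterization of the value group as a function of its chain of archimedean classes. Recall that by \prettyref{fact:an} and the general theory, any omega-field $\K=\rf((G))_\kappa$ embeds in $\rf((G))$ with $G$ as a group of monomials, so $G$ is the value group of $\K$ (up to the usual multiplicative/additive identification). The key external input is that a $\kappa$-bounded Hahn group over $\rf$ is determined, as an ordered group, by its chain of archimedean classes: if $G$ is an ordered abelian group whose chain of archimedean classes is $\Gamma$, and $G$ is "$\kappa$-bounded and $\rf$-graded" in the appropriate sense, then $G\cong\left(\prod t^{\Gamma\rf}\right)_\kappa$. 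In our situation this extra structure is automatic because $G$ is already presented as the monomial group sitting inside $\rf((G))_\kappa$.

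First I would handle part (1). If $\K=\rf((G))_\kappa$ is an omega-field, fix an omega-map $\omega:(\K,+,0,<)\cong(G,\cdot,1,<)$. The chain of archimedean classes of the ordered group $(G,\cdot,<)$ is, by definition of a group of monomials and of the valuation on $\rf((G))_\kappa$, order-isomorphic to the chain $G^{>1}/\!\!\asymp$ of multiplicative archimedean classes, which in turn — via the logarithm-like bookkeeping made explicit in \prettyref{eq:log-monomial} — is the chain $\omega^{\omega^{\K}}$ of fundamental monomials, and this is order-isomorphic to $\K$ itself through $x\mapsto\omega^{\omega^x}$ (here one uses that $\omega^{\omega^{x}}\asymp\omega^{\omega^{y}}$ iff $\omega^x\asymp\omega^y$ iff $x=y$ for the representatives, reducing to the fact that distinct monomials lie in distinct archimedean classes at the additive level). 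Since $(\K,+,<)\cong(G,\cdot,<)$ as chains, the chain of archimedean classes of $G$ is order-isomorphic to $G$. Setting $\Gamma$ equal to this chain and invoking the characterization of $\kappa$-bounded Hahn groups gives $G\cong\left(\prod t^{\Gamma\rf}\right)_\kappa$ with $\Gamma\cong G$, as claimed.

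For part (2), suppose instead that $\K=\rf((G))_\kappa$ merely carries an analytic logarithm $\log$. By \prettyref{defn:analytic-log}(3) we have $\log(G)=\K^{\uparrow}=\rf((G^{>1}))\cap\K$, and since $\log$ is a group isomorphism from $(G,\cdot)$ onto $(\K^{\uparrow},+)$ that is order-preserving, it induces an isomorphism of the chains of archimedean classes. The chain of archimedean classes of the additive group $\K^{\uparrow}=\rf((G^{>1}))_\kappa$ is exactly $G^{>1}$ (a purely infinite series is comparable to its leading monomial, and the leading monomials range over $G^{>1}$). Hence the chain of archimedean classes of $(G,\cdot,<)$ is order-isomorphic to $G^{>1}$, and again the Hahn characterization yields $G\cong\left(\prod t^{\Gamma\rf}\right)_\kappa$ with $\Gamma\cong G^{>1}$.

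The main obstacle I anticipate is pinning down precisely the form of the "characterization of $\kappa$-bounded Hahn groups" invoked above: one must verify that $G$, as it sits inside $\rf((G))_\kappa$, really is the full $\kappa$-bounded Hahn group over its chain of archimedean classes with residue/coefficient structure $\rf$, rather than a proper subgroup. This follows from the construction of $\rf((G))_\kappa$ itself (the monomial group $G$ is literally $\prod t^{\Gamma'\rf}$ for the chain $\Gamma'$ of its own archimedean classes, with $\kappa$-bounded supports), so it is essentially a matter of unwinding definitions and matching up the $\kappa$-boundedness on both sides; no new idea beyond the bookkeeping already developed in Sections 2 and 3 is needed, but it should be stated carefully so the reader sees why $\Gamma$ may be taken to be $G$ itself in (1) and $G^{>1}$ in (2).
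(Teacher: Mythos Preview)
Your proposal has the right ingredients but wraps them in an unnecessary and ultimately circular detour. The ``Hahn characterization theorem'' you invoke---that a $\kappa$-bounded Hahn group is determined by its chain of archimedean classes---is not a theorem that applies to an arbitrary $G$: nothing in the hypotheses says $G$ is a Hahn group to begin with, so your claim that ``the monomial group $G$ is literally $\prod t^{\Gamma'\rf}$ for the chain $\Gamma'$ of its own archimedean classes'' is exactly the conclusion you are trying to prove. An arbitrary divisible ordered abelian group need not be a full $\kappa$-bounded Hahn group over its archimedean skeleton, so this step is a genuine gap.

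The paper's argument avoids the problem entirely by observing that you already \emph{have} an ordered-group isomorphism whose target is literally a Hahn group. For (1), the omega-map gives $(G,\cdot,<)\cong(\K,+,<)$, and the additive group of $\K=\rf((G))_\kappa$ \emph{is} by definition the $\kappa$-bounded $G$-product of $\rf$, i.e.\ $\left(\prod t^{G\rf}\right)_\kappa$; concretely, $\omega^{\sum_{i<\alpha}g_ir_i}\mapsto\prod_{i<\alpha}t^{g_ir_i}$. For (2), $\log$ gives $(G,\cdot,<)\cong(\K^{\uparrow},+,<)$, and $\K^{\uparrow}=\rf((G^{>1}))_\kappa$ has additive group $\left(\prod t^{G^{>1}\rf}\right)_\kappa$; concretely, $\exp\bigl(\sum_{i<\alpha}g_ir_i\bigr)\mapsto\prod_{i<\alpha}t^{g_ir_i}$. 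You in fact wrote down both of these isomorphisms yourself (the $\log$ in part (2) and implicitly the $\omega$ in part (1)); the fix is simply to stop there rather than passing to archimedean classes and invoking a reconstruction theorem.
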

\begin{proof}
	(1) The elements of $\K$ can be written in the form $\sum_{i<\alpha} g_i r_i$. So the elements of $G$ are of the form $\omega^{\sum_{i<\alpha} g_i r_i}$. This corresponds to the element $\prod_{i<\alpha}t^{g_i r_i} \in \left( \prod t^{G \rf} \right)_\kappa$ via an isomorphism.

	(2) Since $\log(G)=\K^{\uparrow}$, we have $G=\exp(\K^{\uparrow})$, and therefore an element $g$ of $G$ can be written in the form $\exp(\sum_{i<\alpha}g_i r_i)$ with $g_i\in G^{>1}$ and $r_i\in \rf$. This corresponds to $\prod_{i<\alpha} t^{g_i r_i} \in \left( \prod t^{G^{>1}\rf} \right)_\kappa$ via an isomorphism.
\end{proof}

In the following corollary we abstract some of the properties of the groups considered above. We refer to \cite{Kuhlmann2000} for the definition of the value-set.

\begin{cor} Let $\K$ be a field of the form $\rf((G))_\kappa$.
\begin{enumerate}
\item If $\K$ has an analytic logarithm, then $G$ is a $\rf$-module, the value set $\Gamma$ of $G$ is order isomorphic to $G^{>1}$, and all the $\rf$-archimedean components of $G$ are isomorphic to the additive group of $\rf$.
\item If $\K$ is an omega-field, the same properties hold (as in particular $\K$ has an analytic logarithm) and in addition $G$ is isomorphic to $G^{>1}$ as a chain.
\end{enumerate}
\end{cor}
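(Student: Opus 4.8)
The plan is to reduce the statement to the explicit description of $G$ furnished by the preceding Proposition, and then to observe that a $\kappa$-bounded Hahn group $H:=\left(\prod t^{\Gamma\rf}\right)_{\kappa}$ carries all of the listed invariants almost by inspection. Concretely, for an arbitrary chain $\Gamma$ one checks: (i) $H$ is an $\rf$-module, with $s\in\rf$ acting by $s\cdot\prod_i t^{\gamma_i r_i}=\prod_i t^{\gamma_i(sr_i)}$; (ii) two nonzero elements of $H$ lie in the same $\rf$-archimedean class exactly when they have the same leading (largest) exponent, so the value set of $H$ is order-isomorphic to $\Gamma$ via ``leading exponent''; (iii) the $\rf$-archimedean component of $H$ attached to $\gamma\in\Gamma$ is canonically $(\rf,+,<)$, read off from the coefficient of $t^{\gamma}$. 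These are routine unwindings of the definitions of value set and archimedean component (see \cite{Kuhlmann2000}).

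For part (1), the Proposition provides an isomorphism $G\cong\left(\prod t^{\Gamma\rf}\right)_{\kappa}$ with $\Gamma$ order-isomorphic to $G^{>1}$. Transporting (i)--(iii) across this isomorphism immediately gives that $G$ is an $\rf$-module, that its value set is order-isomorphic to $\Gamma$ and hence to $G^{>1}$, and that all its $\rf$-archimedean components are isomorphic to $(\rf,+)$ --- which is exactly assertion (1). For part (2), the Proposition instead gives $G\cong\left(\prod t^{\Gamma\rf}\right)_{\kappa}$ with $\Gamma$ order-isomorphic to $G$, so the same three properties follow (with value set order-isomorphic to $G$); it therefore remains only to produce a chain isomorphism $G\cong G^{>1}$, after which ``value set $\cong G$'' also reads ``value set $\cong G^{>1}$''. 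But the omega-map $\omega:(\K,+,<)\cong(G,\cdot,<)$ restricts to a chain isomorphism $\K^{>0}\cong G^{>1}$, and since $\K$ is an ordered field we have $\K\cong\K^{>0}$ as chains; composing, $G\cong\K\cong\K^{>0}\cong G^{>1}$. (Alternatively, an omega-field has an analytic logarithm by \prettyref{thm:omega-log}, so one may instead invoke part (1) together with \prettyref{thm:criterion-for-omega}(1).)

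I do not anticipate a real obstacle: the content lies entirely in step (i)--(iii), i.e.\ confirming that the natural valuation of $\left(\prod t^{\Gamma\rf}\right)_{\kappa}$ has value set precisely $\Gamma$ and all archimedean components equal to $(\rf,+)$. This is standard, and $\kappa$-boundedness plays no role in it (truncating supports changes neither leading terms nor archimedean classes); everything else is a direct transport of structure along the isomorphisms supplied by the Proposition, together with the one-line chain isomorphism $G\cong G^{>1}$ coming from the omega-map.
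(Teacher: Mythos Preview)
Your proposal is correct and is exactly the intended derivation: the paper states this Corollary without proof, treating it as immediate from the preceding Proposition together with the standard facts about $\kappa$-bounded Hahn groups that you spell out in (i)--(iii). Your chain-isomorphism argument $G\cong\K\cong\K^{>0}\cong G^{>1}$ for part (2) is likewise the argument implicit in \prettyref{thm:criterion-for-omega}(1), and your alternative route via \prettyref{thm:omega-log} is the one signalled by the parenthetical in the statement itself.
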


\subsubsection*{Acknowledgements}
	All the authors want to thank the organizers of the trimester ``Model Theory, Combinatorics and Valued fields in model
	theory" at the Institute Henri Poincaré, January 8 - April 6, 2018 where part of this research was conducted. We also thank the anonymous referee for useful comments.


\begin{thebibliography}{10}
	
	\bibitem{Berarducci2015a}
	Alessandro Berarducci and Vincenzo Mantova.
	\newblock {Surreal numbers, derivations and transseries}.
	\newblock {\em Journal of the European Mathematical Society}, 20:339--390,
	2018.
	
	\bibitem{Mantova2017}
	Alessandro Berarducci and Vincenzo Mantova.
	\newblock {Transseries as germs of surreal functions}.
	\newblock {\em Transactions of the American Mathematical Society}, 371(5):3549--3592, 2019.
	
	\bibitem{Conway76}
	John~H. Conway.
	\newblock {\em {On number and games}}, volume~6 of {\em London Mathematical
		Society Monographs}.
	\newblock Academic Press, London, 1976.
	
	\bibitem{vdDries1998}
	Lou van~den Dries.
	\newblock {\em {Tame topology and o-minimal structures}}, volume 248.
	\newblock LMS Lecture Notes Series, Cambridge Univ. Press, 1998.
	
	\bibitem{Dries1994}
	Lou van~den Dries, Angus Macintyre, and David Marker.
	\newblock {The elementary theory of restricted analytic fields with
		exponentiation}.
	\newblock {\em Annals of Mathematics}, 140(1):183--205, 1994.
	
	\bibitem{DriesE2001}
	Lou van~den Dries and Philip Ehrlich.
	\newblock {Fields of surreal numbers and exponentiation}.
	\newblock {\em Fundamenta Mathematicae}, 167(2):173--188, 2001, and Erratum, {\em Fundamenta Mathematicae}, 168(3), 2001, 295--297.
	
	\bibitem{Gonshor1986}
	Harry Gonshor.
	\newblock {\em {An introduction to the theory of surreal numbers}}.
	\newblock London Mathematical Society Lecture Notes Series. Cambridge
	University Press, Cambridge, 1986.
	
	\bibitem{Hahn1907}
	Hans Hahn.
	\newblock {{\"{U}}ber die nichtarchimedischen Gr{\"{o}}ssenszsteme}.
	\newblock {\em Sitz. der K. Akad der Wiss., Math Nat. KL.}, 116(IIa):601--655,
	1907.
	
	\bibitem{Kaplansky1942}
	Irving Kaplansky.
	\newblock {Maximal fields with valuations}.
	\newblock {\em Duke Mathematical Journal}, 9:303--321, 1942.
	
	\bibitem{Kuhlmann1997}
	Franz-Viktor Kuhlmann, Salma Kuhlmann, and Saharon Shelah.
	\newblock {Exponentiation in power series fields}.
	\newblock {\em Proceedings of the American Mathematical Society},
	125(11):3177--3183, 1997.
	
	\bibitem{Kuhlmann2000}
	Salma Kuhlmann.
	\newblock {\em {Ordered Exponential Fields}}, volume~12 of {\em Fields
		Institute Monographs}.
	\newblock American Mathematical Society, Providence, Rhode Island, 2000.
	
	\bibitem{Kuhlmann2005}
	Salma Kuhlmann and Saharon Shelah.
	\newblock {$\kappa$}-bounded exponential-logarithmic power series fields.
	\newblock {\em Annals of Pure and Applied Logic}, 136(3):284--296, nov 2005.
	
	\bibitem{Mourgues1993}
	Marie-H\'{e}l\`{e}ne Mourgues and Jean-Pierre Ressayre.
	\newblock {Every Real Closed Field has an Integer Part}.
	\newblock {\em The Journal of Symbolic Logic}, 58(2):641--647, 1993.
	
	\bibitem{Neumann1949}
	Bernhard~Hermann Neumann.
	\newblock {On ordered division rings}.
	\newblock {\em Trans. Amer. Math. Soc}, 66(1):202--252, 1949.
	
	\bibitem{Ressayre1993}
	Jean-Pierre Ressayre.
	\newblock {Integer parts of real closed exponential fields (extended
		abstract)}.
	\newblock In Peter Clote and J.~Kraj\'\i\v{c}ek, editors, {\em Arithmetic,
		Proof Theory, and Computational Complexity (Prague, 1991)}, volume~23 of {\em
		Oxford Logic Guides}, pages 278--288. Oxford University Press, New York,
	1993.
	
\end{thebibliography}
\end{document}